\def\today{\ifcase \month \or
   January \or February \or March \or April \or
   May \or June \or July \or August \or
   September \or October \or November \or December \fi
   \space\number\day , \number\year}
  \newcommand\@dotsep{4.5}
  \def\@tocline#1#2#3#4#5#6#7{\relax
     \ifnum #1>\c@tocdepth 
     \else
     \par \addpenalty\@secpenalty\addvspace{#2}%
     \begingroup \hyphenpenalty\@M
     \@ifempty{#4}{%
     \@tempdima\csname r@tocindent\number#1\endcsname\relax
        }{%
         \@tempdima#4\relax
           }%
      \parindent\z@ \leftskip#3\relax \advance\leftskip\@tempdima\relax
      \rightskip\@pnumwidth plus1em \parfillskip-\@pnumwidth
       #5\leavevmode\hskip-\@tempdima #6\relax
       \leaders\hbox{$\m@th
       \mkern \@dotsep mu\hbox{.}\mkern \@dotsep mu$}\hfill
       \hbox to\@pnumwidth{\@tocpagenum{#7}}\par
       \nobreak
        \endgroup
         \fi}
\begin{document}

\makeatletter
\@addtoreset{figure}{section}
\def\thefigure{\thesection.\@arabic\c@figure}
\def\fps@figure{h,t}
\@addtoreset{table}{bsection}

\def\thetable{\thesection.\@arabic\c@table}
\def\fps@table{h, t}
\@addtoreset{equation}{section}
\def\theequation{
\arabic{equation}}
\makeatother

\newcommand{\bfi}{\bfseries\itshape}

\newtheorem{theorem}{Theorem}
\newtheorem{acknowledgment}[theorem]{Acknowledgment}
\newtheorem{corollary}[theorem]{Corollary}
\newtheorem{definition}[theorem]{Definition}
\newtheorem{example}[theorem]{Example}
\newtheorem{lemma}[theorem]{Lemma}
\newtheorem{notation}[theorem]{Notation}
\newtheorem{problem}[theorem]{Problem}
\newtheorem{proposition}[theorem]{Proposition}
\newtheorem{question}[theorem]{Question}
\newtheorem{remark}[theorem]{Remark}

\numberwithin{theorem}{section}
\numberwithin{equation}{section}

\newcommand{\todo}[1]{\vspace{5 mm}\par \noindent
\framebox{\begin{minipage}[c]{0.85 \textwidth}
\tt #1 \end{minipage}}\vspace{5 mm}\par}

\renewcommand{\1}{{\bf 1}}

\newcommand{\Ad}{{\rm Ad}}
\newcommand{\ad}{{\rm ad}}

\newcommand{\Cartan}{{\rm Cartan}}
\newcommand{\de}{{\rm d}}
\newcommand{\Decomp}{{\rm Decomp}}
\newcommand{\diag}{{\rm diag}}
\newcommand{\fin}{{\rm fin}}
\newcommand{\gl}{{{\mathfrak g}{\mathfrak l}}}
\newcommand{\id}{{\rm id}}
\newcommand{\ie}{{\rm i}}
\newcommand{\Gr}{{\rm Gr}}
\newcommand{\GL}{{\rm GL}}
\renewcommand{\Im}{{\rm Im}}
\newcommand{\Ker}{{\rm Ker}\,}
\newcommand{\Lie}{\textbf{L}}
\newcommand{\OO}{{\rm O}}
\newcommand{\Ran}{{\rm Ran}\,}
\newcommand{\rank}{{\rm rank}\,}
\newcommand{\sa}{{\rm sa}}
\newcommand{\Sp}{{\rm Sp}}
\renewcommand{\sp}{{{\mathfrak s}{\mathfrak p}}}
\newcommand{\spec}{{\rm spec}}

\newcommand{\Tr}{{\rm Tr}\,}

\newcommand{\CC}{{\mathbb C}}
\newcommand{\KK}{{\mathbb K}}
\newcommand{\NN}{{\mathbb N}}
\newcommand{\RR}{{\mathbb R}}
\renewcommand{\SS}{{\mathbb S}}
\newcommand{\TT}{{\mathbb T}}

\newcommand{\bb}{b}
\newcommand{\vv}{v}
\newcommand{\ww}{w}

\newcommand{\G}{{\rm G}}
\newcommand{\U}{{\rm U}}
\newcommand{\Gl}{{\rm GL}}
\newcommand{\SL}{{\rm SL}}
\newcommand{\SU}{{\rm SU}}
\newcommand{\VB}{{\rm VB}}

\newcommand{\Ac}{{\mathcal A}}
\newcommand{\Bc}{{\mathcal B}}
\newcommand{\Cc}{{\mathcal C}}
\newcommand{\Dc}{{\mathcal D}}
\newcommand{\Fc}{{\mathcal F}}
\newcommand{\Hc}{{\mathcal H}}
\newcommand{\Ic}{{\mathcal I}}
\newcommand{\Jc}{{\mathcal J}}
\newcommand{\Kc}{{\mathcal K}}
\newcommand{\Pc}{{\mathcal P}}
\newcommand{\Qc}{{\mathcal Q}}
\newcommand{\Sc}{{\mathcal S}}
\newcommand{\Tc}{{\mathcal T}}
\newcommand{\Vc}{{\mathcal V}}
\newcommand{\Bg}{{\mathfrak B}}
\newcommand{\ag}{{\mathfrak a}}
\newcommand{\bg}{{\mathfrak b}}
\renewcommand{\gg}{{\mathfrak g}}
\newcommand{\hg}{{\mathfrak h}}
\newcommand{\mg}{{\mathfrak m}}
\newcommand{\og}{{\mathfrak o}}
\newcommand{\ug}{{\mathfrak u}}
\newcommand{\nng}{{\mathfrak n}}
\newcommand{\pg}{{\mathfrak p}}
\newcommand{\Ig}{{\mathcal I}}
\newcommand{\Jg}{{\mathcal I}}
\newcommand{\Lg}{{\mathfrak L}}
\newcommand{\Sg}{{\mathfrak S}}

\markboth{}{}

\makeatletter
\title[Interplay between Algebraic Groups 
and Operator Ideals]
{Interplay between Algebraic Groups, Lie Algebras and Operator Ideals}
\author[Daniel Belti\c t\u a]{Daniel Belti\c t\u a$^*$}
\thanks{$^*$
Partially supported by the Grant
of the Romanian National Authority for Scientific Research, CNCS-UEFISCDI,
project number PN-II-ID-PCE-2011-3-0131, 
and by Project MTM2010-16679, DGI-FEDER, of the MCYT, Spain.}
\author[Sasmita Patnaik]{Sasmita Patnaik$^{**}$}
\thanks{$^{**}$Partially supported by the Graduate Dissertation Fellowship 
from the Charles Phelps Taft Research Center.}
\author[Gary Weiss]{Gary Weiss$^{***}$}
\thanks{$^{***}$
Partially supported by 
Simons Foundation Collaboration Grant for Mathematicians \#245014.}
\dedicatory{Dedicated to the memory of Mih\'aly Bakonyi}

\keywords{operator ideal, linear algebraic group, Cartan subalgebra}
\subjclass[2010]{Primary 22E65; Secondary 47B10, 47L20, 14L35, 20G20, 47-02}
\makeatother

\begin{abstract}
In the framework of operator theory, 
we investigate a close Lie theoretic relationship between all operator ideals 
and certain classical groups of invertible operators that can be described as 
the solution sets of certain algebraic equations, 
hence can be regarded as infinite-dimensional linear algebraic groups. 
Historically, this has already been done for only the complete-norm ideals;  
in that case one can work within the framework of the well-known Lie theory for Banach-Lie groups.  
That kind of Lie theory is not applicable for arbitrary operator ideals, 
so we needed to find a new approach for dealing with the general situation. 
The simplest instance of the aforementioned relationship is provided by 
the Lie algebra $\ug_{\Ic}(\Hc)=\{X\in\Ic\mid X^*=-X\}$ 
associated with the group $\U_{\Ic}(\Hc)=\U(\Hc)\cap(\1+\Ic)$ where $\Ic$ is an arbitrary operator ideal 
in $\Bc(\Hc)$ and $\U(\Hc)$ is the full group of unitary operators. 
We investigate the Cartan subalgebras (maximal abelian self-adjoint subalgebras) of $\ug_{\Ic}(\Hc)$ for 
$\{0\}\subsetneqq\Ic\subsetneqq\Bc(\Hc)$, 
and obtain an uncountably many $\U_{\Ic}(\Hc)$-conjugacy classes of these Cartan subalgebras. 
The cardinality proof will be given in a follow up paper \cite{BPW13} 
and stands in contrast to the $\U(\Hc)$-uniqueness work of de la Harpe \cite{dlH72}.
\end{abstract}

\maketitle

\tableofcontents

\newpage

\section{Introduction}

In the framework of operator theory on
Hilbert spaces, 
in this paper we investigate  a rather close Lie theoretic relationship 
between all operator ideals and certain so-called classical groups of invertible operators 
that can be described as the sets of solutions of certain algebraic equations, 
e.g., $T^{-1}=\widetilde{J}T^*\widetilde{J}^{-1}$
(see Definitions \ref{complex_gr} and \ref{real_gr} below)
hence can be regarded as infinite-dimensional linear algebraic groups.

In the study of complete normed operator ideals, 
the aforementioned classical groups of invertible operators 
have natural Banach-Lie group structure. 
These groups and their Lie algebras were developed systematically in \cite{dlH72} 
and they go back to \cite{Sch60} and \cite{Sch61} in the case of the Hilbert-Schmidt ideal. 
More recently, the study of these infinite-dimensional Lie groups was pursued to investigate 
various aspects of their structure theory: 
Cartan subalgebras (\cite{Al82}), classification (\cite{CGM90}), 
topological properties (\cite{Ne02}), 
coadjoint orbits (\cite{Ne04}), Iwasawa decompositions (\cite{Be09} and \cite{Be11}), etc. 
Perhaps even more remarkable,  
the representation theory of some classical groups associated with the Schatten ideals  
was developed using the model of the complex semisimple Lie groups 
(see for instance \cite{SV75},  \cite{Bo80}, \cite{Ne98}, and their references). 

On the other hand, there are many interesting operator ideals that do not admit any complete norm 
(see for instance \cite{Va89} and \cite{DFWW04}, and also Example~\ref{ideals_ex} below). 
The classical groups associated with these ideals 
can still be defined via algebraic equations, and yet these groups are no longer Lie groups. 
It seems natural, therefore, to study the aforementioned Lie theoretic relationship 
beyond the realm of complete normed ideals. 
We are thus led to develop a suitable Lie theory for 
linear algebraic groups in infinite dimensions   
by extending the earlier approach of \cite{HK77} 
to the classical groups associated with arbitrary  (i.e., possibly non-normed) operator ideals. 
One could then expect an interesting interaction 
between the entire class of operator ideals and various algebraic groups of invertible operators, 
with benefits for the study of both of these. 

This paper is devoted to the very first few steps of this program suggested above, 
thereby laying the foundation for a more advanced investigation recorded in 
the follow-up \cite{BPW13}. 
We will first present here a few basic elements of Lie theory 
of linear algebraic groups (Section~\ref{sect2}) 
in order to introduce the structures we are looking for in our infinite-dimensional setting. 
After that, we will introduce the infinite-dimensional algebraic groups  
suitable for our purposes (see Definition~\ref{algebraic} and particularly Definition~\ref{algebraic_restr}) 
and we will establish in Section~\ref{sect3} some very basic facts 
related to the Lie theory for such groups. 
With that notion developed, it makes sense to ask whether the basic features of the structure theory 
of reductive or semisimple Lie algebras can be suitably extended to the classical Lie algebras of operator ideals. 
In this connection we will focus in Section~\ref{sect4} on the study of conjugacy classes of Cartan subalgebras 
(that is, maximal abelian self-adjoint subalgebras). 
That topic is well understood in the case of finite-dimensional semisimple Lie algebras, 
and here we will discuss the phenomena one encounters when one investigates such objects  
for certain infinite-dimensional classical Lie algebras constructed from operator ideals. 
\newpage
\subsection*{General notation convention} 
Throughout this paper we denote by $\Hc$ a separable infinite-dimensional complex Hilbert space, 
by $\Bc(\Hc)$ the unital $C^*$-algebra of all bounded linear operators on $\Hc$ 
with the identity operator $\1\in\Bc(\Hc)$, 
and by $\GL(\Hc)$ the group of all invertible operators in $\Bc(\Hc)$. 
Let $\Kc(\Hc)$ denote the ideal of compact operators on $\Hc$ and  
$\Fc(\Hc)$ denote the ideal of finite-rank operators on $\Hc$. 
By an operator ideal in $\Bc(\Hc)$, 
herein we mean a two-sided ideal of $\Bc(\Hc)$, 
and we say $\Ic$ is a proper ideal if $\{0\}\subsetneqq\Ic\subsetneqq\Bc(\Hc)$, in which case as is well-known,
$\{0\}\subsetneqq\Fc(\Hc)\subseteq\Ic\subseteq\Kc(\Hc)\subsetneqq\Bc(\Hc)$. 
We will use the following notation for the full unitary group on~$\Hc$: 
$$U(\Hc)=\{V\in\Bc(\Hc)\mid V^*V=VV^*=\1\}$$ 
and we will also need its subgroup 
$$U_{\Kc(\Hc)}=U(\Hc)\cap(\1+\Kc(\Hc)) $$
(and denoted by $U_\infty(\Hc)$ for instance in \cite{Ne98}). 
For every $n\ge 1$, we denote by $M_n(\CC)$ the set  
of all $n\times n$ matrices with complex entries, 
endowed with its natural topology when viewed as an $n^2$-dimensional complex vector space. 
Thus, convergence of a sequence in $M_n(\CC)$ means entrywise convergence. 
For all $X,Y\in M_n(\CC)$ we denote $[X,Y]:=XY-YX$ 
and~$X^*$ denotes the conjugate transpose of $X$. 
We also define the general linear group $\GL(n,\CC):=\{T\in M_n(\CC)\mid\det T\ne0\}$, 
i.e., the group of invertible matrices in $M_n(\CC)$, 
which is a dense open subset of~$M_n(\CC)$.  

We will adopt the convention that the Lie groups and the algebraic groups are denoted by 
upper case Latin letters and their Lie algebras are denoted 
by the corresponding lower case Gothic letters. 
In particular, for the unitary group $U(n):=\{V\in M_n(\CC)\mid V^*V=\1\}$, 
which is a compact subgroup of $\GL(n,\CC)$, 
its Lie algebra (see Definition~\ref{Lie} and Remark~\ref{Lie_note}) 
consists of all skew-adjoint matrices and is denoted by $\ug(n):=\{X\in M_n(\CC)\mid X^*=-X\}$.

\section{Linear Lie theory}\label{sect2}

The point of this paper is that we will avoid the notion of Lie group 
and will replace it with the notion of linear algebraic group, 
in order to be able to cover the classical groups associated with operator ideals that do not support any complete norm. 
For that reason, the discussion in this preliminary section is streamlined on two levels. 
Specifically, we will avoid Lie groups in the primary line of development of the discussion (the main definitions made below),  
but we will insert comments on Lie groups in the secondary line of development. 
The reader can easily observe that organization of the discussion, 
since the main definitions in this section can be understood without any knowledge on Lie groups 
but each of these definitions is accompanied by remarks that place it in a proper perspective, 
by looking at from the point of view of Lie groups. 

\subsection*{Basic definitions}

\begin{definition}\label{Lie_alg}
\normalfont
Let $\KK\in\{\RR,\CC\}$. 
A \emph{Lie algebra} over $\KK$ is  
a vector space $\gg$ over $\KK$ endowed with a $\KK$-bilinear map 
(called the Lie bracket) 
$[\cdot,\cdot]\colon\gg\times\gg\to\gg$  
satisfying 
the Jacobi identity 
$$[[X,Y],Z]+[[Y,Z],X]+[[Z,X],Y]=0$$
and the skew-symmetry condition 
$$[X,Y]=-[Y,X]$$ 
for all $X,Y,Z\in\gg$. 
A \emph{subalgebra} of $\gg$ is any $\KK$-linear subspace $\hg\subseteq\gg$ that is closed under the Lie bracket. 
(If moreover $[\gg,\hg]\subseteq\hg$, then $\hg$ is called a \emph{Lie ideal} of $\gg$.) 
The subalgebra $\hg$ is \emph{abelian} if the Lie bracket vanishes on it. 
If $\gg_1$ is another Lie algebra over $\KK$ with the Lie bracket denoted again by $[\cdot,\cdot]$, 
then a $\KK$-linear mapping $\varphi\colon\gg\to\gg_1$ is called a \emph{Lie-homomorphism} 
if for all $X,Y\in\gg$ we have $\phi([X,Y])=[\phi(X),\phi(Y)]$; 
if moreover $\phi$ is a bijection, then we say that it is an \emph{Lie-isomorphism}. 

An \emph{involution} $X\mapsto X^*$ on $\gg$ is 
a mapping  $\gg\to\gg$  
satisfying $(X^*)^*=X$, 
conjugate-linearity $(zX+wY)^*={\bar z}X^*+{\bar w}Y^*$ 
(which is actually linearity in the case $\KK=\RR$), 
and the condition of compatibility of the Lie bracket with the involution, 
$$[X,Y]^*=[Y^*,X^*]$$
for all $z,w\in\KK$ and $X,Y\in\gg$. 

If $\gg$ is a Lie algebra with an involution, then by \emph{Cartan subalgebra} of $\gg$ 
we mean a subalgebra which is maximal in $\gg$ among those abelian subalgebras closed under this involution. 
\end{definition}


\begin{example}\label{assoc}
\normalfont
Again let $\KK\in\{\RR,\CC\}$ and $\Ac$ be an \emph{associative algebra} over $\KK$. 
That is, $\Ac$ is a $\KK$-vector space endowed with a bilinear map 
$\Ac\times\Ac\to\Ac$, $(a,b)\mapsto ab$
which is an \emph{associative product},  
that is, for all $a,b,c\in\Ac$ we have $(ab)c=a(bc)$. 
This associativity condition implies that if we define 
$[a,b]:=ab-ba$ for all $a,b\in\Ac$, 
then we obtain a Lie bracket in the sense of Definition~\ref{Lie_alg}. 
Thus every associative algebra gives rise, in this canonical way, 
to a Lie algebra with the same underlying vector space as the associative algebra itself. 
\end{example}

\begin{example}\label{invol_ex}
\normalfont 
Here are the main examples of involutions on Lie algebras that will be encountered in this paper: 
\begin{enumerate}
\item\label{invol_ex_item1} 
If $\gg$ is a Lie algebra over $\KK=\RR$, called a real Lie algebra, 
then the mapping $X\mapsto -X$ is an involution. 
Thus, all real Lie algebras have involutions.
\item\label{invol_ex_item2} 
If $\gg$ is a $\KK$-linear subspace of $\Bc(\Hc)$ with the property that for all $X,Y\in\gg$ 
it contains the commutator $[X,Y]=XY-YX\in\gg$ and the adjoint operator $X^*\in\gg$, 
then $\gg$ is a Lie algebra over $\KK$ endowed with the involution $X\mapsto X^*$. 
Thus, for instance, $\Bc(\Hc)$ itself is a complex Lie algebra with an involution. 
More generally, any operator ideal in $\Bc(\Hc)$ is a complex Lie algebra with the same involution. 
\end{enumerate}
Note that the set of all skew-adjoint operators 
$$\ug(\Hc)=\{X\in\Bc(\Hc)\mid X^*=-X\}$$
is a real Lie algebra for which the aforementioned two involutions 
(\eqref{invol_ex_item1} and \eqref{invol_ex_item2}) coincide. 
More generally, if a real linear subspace $\gg\subseteq\ug(\Hc)$ is closed under operator commutators, 
then $\gg$ is a real Lie algebra for which the above two involutions coincide. 
\end{example}

\begin{remark}\label{PBW}
\normalfont
The above example also shows that the Lie bracket of a Lie algebra may not 
come from an associative product on that algebra as in Example~\ref{assoc}, 
in that, if $X,Y\in\ug(\Hc)$ 
then $[X,Y]=XY-YX\in\ug(\Hc)$ although it may happen that $XY\not\in\ug(\Hc)$. 
Examples of such situations are easily constructed: if $0\ne X=-X^*\in\Bc(\Hc)$ 
and $Y:=X$, then $X,Y\in\ug(\Hc)$ and $XY=X^2\ne 0$, hence $(XY)^*=X^2=XY\ne-XY$, 
and thus $XY\not\in\ug(\Hc)$.

Concerning the general relationship between Lie algebras and 
associative algebras, we recall from Example~\ref{assoc} 
that every associative algebra can be regarded 
as a Lie algebra with the Lie bracket defined as the single commutator of its elements. 
In the converse direction (that is, from Lie algebras towards associative algebras), 
it follows by the Poincar\'e-Birkhoff-Witt theorem 
that for every Lie algebra $\gg$ over $\KK$ 
there exists an associative algebra $\Ac$ over $\KK$ such that 
$\gg$ is a \emph{Lie subalgebra} of $\Ac$, in the sense that $\gg$ is a $\KK$-linear subspace 
of $\Ac$ and for all $X,Y\in\gg$ we have $[X,Y]=XY-YX$, 
where the left-hand side uses the Lie bracket of $\gg$ while the right-hand side 
involves the products computed in $\Ac$. 
We recall that such an algebra $\Ac$ is constructed 
as the quotient of the tensor algebra $\bigoplus\limits_{k\ge0}(\otimes^k \gg)$ by 
its two-sided ideal generated by the subset $\{X\otimes Y-Y\otimes X-[X,Y]\mid X,Y\in\gg\}\subseteq(\gg\otimes\gg)\oplus\gg$; 
see for instance \cite[Chapter III]{Kn96} or \cite[Appendix C]{GW09}. 
See also Remark~\ref{Ado} below for additional information on the relationship between 
Lie algebras and associative algebras in the finite-dimensional case. 
\end{remark}

The above setting of Lie algebras will now be specialized to finite dimensions until the 
end of this section. 
This will then serve as motivation for the following sections 
in our study of infinite-dimensional structures (particularly algebraic groups).
We will begin this discussion by 
introducing a special class of Lie algebras with involutions 
in the sense of Definition~\ref{Lie_alg}.

\begin{definition}\label{red_alg}
\normalfont 
A real (respectively, complex) \emph{linear Lie algebra} is a real (respectively, complex) 
linear subspace $\gg\subseteq M_n(\CC)$ for some integer $n\ge1$
such that for all $X,Y\in\gg$ we have $[X,Y]\in\gg$. 
Moreover, we say that $\gg$ is a \emph{linear reductive Lie algebra} 
if for every $X\in\gg$ we have $X^*\in\gg$, 
where we recall that $X^*$ denotes the conjugate transpose of~$X$. 
\end{definition}

\begin{remark}\label{Ado}
\normalfont
Every finite-dimensional Lie algebra is isomorphic to a linear Lie algebra in the sense of Definition~\ref{red_alg}, 
as a consequence of Ado's theorem; see for instance \cite[2.5.5--6]{Di77}.  
Nevertheless, the terminology of Definition~\ref{red_alg} is useful as a Lie algebraic counterpart of the notion introduced in Definition~\ref{red_gr} below, 
in order to emphasize that the various versions of Lie groups are studied by using their Lie algebras. 

Also, the notion of a linear Lie algebra provides a way of defining reductive Lie algebras, 
which fits well with the infinite-dimensional Lie algebras constructed out of arbitrary operator ideals 
in the later sections of this paper.  
More traditional is the equivalent way of defining a reductive Lie algebra 
as a finite-dimensional Lie algebra $\gg$  with the property that 
to each ideal $\ag$ in $\gg$ there corresponds an ideal $\bg$ in $\gg$ 
such that we have the direct sum decomposition $\gg=\ag\oplus\bg$ (see for instance \cite{Kn96}).  
This approach can be directly extended to the classical Lie algebras 
constructed out of the Hilbert-Schmidt ideal 
and was pursued in \cite{Sch60} and \cite{Sch61}.  
However, it does not seem to be easily adapted to arbitrary operator ideals. 
\end{remark}

\begin{definition}\label{Lie}
\normalfont
If $G$ is a closed subgroup of $\GL(n,\CC)$, then 
the \emph{Lie algebra of $G$} is 
$\gg:=\{X\in M_n(\CC)\mid(\forall t\in\RR)\quad \exp(tX)\in G\}$. 
\end{definition}

\begin{remark}\label{Lie_note}
\normalfont
We recall that $\GL(n,\CC)$ is a Lie group and every closed subgroup of a Lie group 
is in turn a Lie group;  
see for instance \cite{Kn96}. 
In particular, the group $G$ from Definition~\ref{Lie} has the natural structure of a Lie group, 
and its Lie algebra defined above agrees with 
the notion of the Lie algebra defined by using the differentiable structure as in  
 \cite[Chapter I, Section 10]{Kn96}. 
The description of Lie algebras from Definition~\ref{Lie} (also used for instance in \cite[Subsection 1.3.1]{GW09}) 
has the advantage that it allows for the concrete computation of the Lie algebra under consideration, 
and moreover it extends directly to groups 
related to general operator ideals; see Theorem~\ref{alg_indeed}.
\end{remark}

We now recall the linear reductive groups after \cite[Def. 2.5]{Vo00}. 

\begin{definition}\label{red_gr}
\normalfont
A \emph{linear reductive group} is a closed subgroup $G$ of 
$\GL(n,\CC)$ 
for some $n\ge1$ satisfying the following conditions: 
\begin{enumerate}
\item for every $T\in G$ we have $T^*\in G$; 
\item $G$ has finitely many connected components. 
\end{enumerate}
\end{definition}

We will define the linear algebraic groups below as in \cite[Def. 1.4.1]{GW09}. 
By using the natural embeddings 
$$\GL(n,\CC)\hookrightarrow\GL(2n,\RR)\hookrightarrow\GL(2n,\CC),$$ 
the real linear algebraic groups in the sense of the following definition 
can be regarded as the groups of $\RR$-rational points of suitable linear algebraic groups in $\GL(2n,\CC)$, 
in the terminology of \cite[Def. 1.7.1]{GW09} and \cite[2.1.1]{Sp98}. 
We prefer the terminology to be introduced below since it is closer related 
to the one already used in \cite{HK77} for the algebraic groups in infinite dimensions  
and moreover it was also used even for finite-dimensional algebraic groups for instance in \cite{Bor01}. 
The following definition will be extended in Definition~\ref{algebraic} below. 
Recall that  
the entries of the inverse of a matrix $T\in\GL(n,\CC)$ are given by certain 
quotients of polynomials in terms of the entries of $T$, 
the coefficients of these polynomials depending only on $n\ge1$ 
and on the position of the corresponding entry of~$T^{-1}$. 
Therefore it is easily seen that if a subgroup of $\GL(n,\CC)$ is equal to 
the set of solutions to some polynomial equations depending on $T$ and $T^{-1}$ 
(as in Definition~\ref{algebraic}), 
then it is also given by a set of polynomial equations depending only on $T$, 
as in the following definition. 

\begin{definition}\label{alg_gr}
\normalfont
A \emph{real linear algebraic group} is a subgroup $G$ of 
$\GL(n,\CC)$ 
for some $n\ge1$ such that 
there exists a family $\Pc$ of not necessarily holomorphic polynomials on $M_n(\CC)$ 
with  
$$G=\{T\in \GL(n,\CC)\mid (\forall p\in\Pc)\quad p(T)=0\}.$$ 
By a not necessarily holomorphic polynomial on $M_n(\CC)$ we mean any complex valued 
function on $M_n(\CC)$ defined by a polynomial in the matrix entries and their complex conjugates.

If we also have $T^*\in G$ for every $T\in G$, then $G$ will be called 
a \emph{real reductive linear algebraic group}. 
On the other hand, if the above set $\Pc$ can be chosen to consist only of 
holomorphic polynomials 
(i.e., involving no complex conjugates of matrix entries), 
then we say that $G$ is a \emph{linear algebraic group}  
or a \emph{reductive linear algebraic group}, respectively. 
\end{definition}

The following definition describes precisely the compact Lie groups (see \cite{Kn96} 
and Remark~\ref{trick} below) 
and we state it in this way since it fits well with the purpose of the present paper. 
Namely, it emphasizes the existence of a particular realization of a compact Lie group, 
rather than its differentiable structure. 

\begin{definition}\label{comp_gr}
\normalfont
A \emph{compact linear group} is a closed subgroup of the unitary group $U(n)$ for some integer $n\ge1$. 
\end{definition}

\begin{remark}\label{trick}
\normalfont
For further motivation of the above terminology, 
recall that $U(n)$ is a compact group hence its closed subgroups are in turn compact. 
Conversely, every compact subgroup of $\GL(n,\CC)$ is a compact linear group 
in the sense of Definition~\ref{comp_gr} after a suitable change of basis in $\CC^n$, 
where $\GL(n,\CC)$ is regarded as the group of all invertible linear operators on $\CC^n$. 
More specifically, one can use the so-called Weyl's unitarian trick (\cite[Prop. 4.6]{Kn96}) 
for defining a scalar product on $\CC^n$ 
that is invariant under the action of every operator in the compact group~$G$, 
and thus $G$ can be viewed as a group of unitary matrices by using a basis in $\CC^n$ 
which is orthonormal with respect to the new scalar product.
\end{remark}

In connection with the above notions, we note that the implications 
$$\begin{matrix}
\text{linear reductive group (Definition~\ref{red_gr})}\\ 
\Uparrow \\
\text{real reductive linear algebraic group (Definition~\ref{alg_gr})}\\ 
\Uparrow \\
\text{compact linear group (Definition~\ref{comp_gr})} 
\end{matrix}$$
hold true; see for instance \cite[Subsection 5.2]{Bor01} for the bottom implication, 
while the implication from the top is obvious from the definitions. 

\begin{remark}\label{red_facts}
\normalfont  We record a few simple facts related to the above definitions. 
\begin{enumerate}
\item\label{red_facts_item1} 
Every linear reductive Lie algebra is a Lie algebra with an involution in the sense of Definition~\ref{Lie_alg}. 
\item\label{red_facts_item2} 
The Lie algebra of a closed subgroup $G$ of $\GL(n,\CC)$ is a linear Lie algebra 
in the sense of Definition~\ref{red_alg}. 
See Remark~\ref{closed_alg} for a proof of this fact in a more general setting. 
\item\label{red_facts_item4} 
If $G$ is a closed group of  $\GL(n,\CC)$ with the Lie algebra $\gg$, 
then for all $T\in G$ and $Y\in\gg$ we have $TYT^{-1}\in\gg$ 
and the mapping 
$$\Ad_G\colon G\times\gg\to\gg,\quad (T,Y)\mapsto\Ad_G(T)Y:=TYT^{-1}$$
is called the \emph{adjoint action} of $G$. 
Moreover, one has the mapping 
$$\ad_{\gg}\colon\gg\times\gg\to\gg,\quad (X,Y)\mapsto(\ad_{\gg}X)Y:=[X,Y]=XY-YX$$
called the \emph{adjoint representation} of~$\gg$. 

The notation convention for $\Ad$ and $\ad$  is related to the one of denoting the linear algebraic groups 
or the Lie groups by upper case (Roman) letters
and the Lie algebras by lower case (Gothic) leters.
The connection between  $\Ad$ and $\ad$
is that if one picks a 1-parameter group $t\mapsto\exp(tX)$ with $X\in\gg$ 
and one differentiates the corresponding 1-parameter group of linear transformations 
$\Ad_G(\exp(tX))\colon\gg\to\gg$  at $t=0$, 
then one obtains $\ad_{\gg} X$, and geometrically this reflects the fact that the Lie algebra of $G$ 
is the tangent space at $\1\in G$.
For instance the unit circle can be viewed as a compact linear group and its Lie algebra is the tangent line at~1,
viewed as a vector space with the origin at that point~1.
\end{enumerate}
\end{remark}

\subsection*{Representations of linear reductive Lie groups}
The main problem with the above definitions of linear reductive groups and their Lie algebras is 
that they depend on the embeddings of these objects into a matrix algebra. 
For instance, if $G\subseteq M_n(\CC)$ is a linear reductive group, 
then for every $l\ge 1$ one gets another embedding 
$G\hookrightarrow M_{n+l}(\CC)$, $T\mapsto \begin{pmatrix} T & 0 \\ 0 & \1\end{pmatrix}$, 
and so on. 
It is then natural to wonder about the embeddings which are minimal in some reasonable sense. 
In connection with this problem, we will focus on the embeddings $G\subseteq M_n(\CC)$ which are irreducible, 
that is, the commutant of $G$ is precisely all scalar multiplies of the identity.

For the sake of simplicity we will discuss only the case when $G$ is a compact linear group, 
and we will consider mappings that are slightly more general than the embeddings, namely the representations, 
in the sense of the following definition. 

\begin{definition}
\normalfont
If $G$ is a compact linear group and $m\ge 1$ is an integer, then a \emph{unitary $m$-dimensional representation} of $G$ 
is a continuous mapping $\pi\colon G\to U(m)$ such that for all $T,S\in G$ we have $\pi(TS)=\pi(T)\pi(S)$. 
We say $\pi$ is a \emph{unitary irreducible} representation if 
the scalar multiples of $\1\in M_m(\CC)$ are the only matrices in $M_m(\CC)$ that commute with every matrix 
$\pi(T)$ with $T\in G$. 
\end{definition}

\begin{remark}\label{schol}
\normalfont
The classification of unitary irreducible representations 
of a compact linear group relies quite heavily on 
the Cartan subalgebras of the Lie algebra $\gg$ of $G$. 
In order to explain that point and to motivate the problems 
we will address for operator Lie algebras and algebraic groups in the later sections of the present paper, 
we sketch below the method of work that eventually leads to 
the aforementioned classification. 

\emph{Step 1}. 
If $\pi\colon G\to U(m)$ is a unitary irreducible representation,  
then one can define its differential by 
$$\de\pi\colon\gg\to\ug(m),\quad \de\pi(X)=\frac{\de}{\de t}\Big\vert_{t=0}\pi(\exp(tX))$$
and then one can prove that $\de\pi$ is a homomorphism of Lie algebras, hence 
\begin{equation}\label{schol_eq1}
(\forall X,Y\in\gg)\quad \de\pi([X,Y])=[\de\pi(X),\de\pi(Y)].
\end{equation}

\emph{Step 2}. 
If we pick a Cartan subalgebra $\hg\subset\gg$, then \eqref{schol_eq1} implies that 
$\de\pi(\hg)$ is a linear subspace of $M_m(\CC)$ consisting of \emph{mutually commuting} skew-adjoint matrices. 
Hence the Hilbert space $\CC^m$ splits into the orthogonal direct sum 
$$\CC^m=\bigoplus_{j=1}^k\Vc_j,$$
where there exist distinct linear functionals $\lambda_1,\dots,\lambda_k\colon\hg\to\RR$, 
to be called the \emph{weights} of the representation $\pi$, 
such that 
$$\Vc_j=\{v\in\CC^m\mid(\forall X\in\hg)\quad \de\pi(X)v=i\lambda_j(X)v\}
\text{ for }j=1,\dots,k.$$

\emph{Step 3}. 
If we now pick a basis $H_1,\dots,H_{\ell}$ in $\hg$, 
then we obtain a linear isomorphism from the dual linear space of $\hg$ onto $\RR^{\ell}$ 
by 
$$\hg^*\to\RR^{\ell},\quad \lambda\mapsto(\lambda(H_1),\dots,\lambda(H_{\ell}))$$
By using this isomorphism we can transport the lexicographic ordering from $\RR^{\ell}$ to~$\hg^*$. 
We thus get a total ordering on $\hg^*$ and after a renumbering of the weights of the representation $\pi$ 
we may assume that $\lambda_1\ge\cdots\ge\lambda_{\ell}$. 

\emph{Step 4}. 
The theorem of the highest weight (see \cite{Kn96}) roughly says that, 
after a Cartan subalgebra $\hg$ and a basis in that Cartan subalgebra have been fixed as above, 
each unitary irreducible representation is uniquely determined (up to a unitary equivalence) 
by its highest weight. 
The theorem also characterizes the linear functionals on $\hg$ 
that can occur as highest weights of unitary irreducible representations. 
\end{remark}

In connection with the above discussion, 
we recall that the classification of the unitary irreducible representations 
does not really depend on the choice of the Cartan subalgebra, 
and this important fact basically follows from the following
conjugation theorem for Cartan subalgebras: 

\begin{theorem}\label{first_th}
If $G$ is a compact linear group whose Lie algebra $\gg$ 
is endowed with the involution $X\mapsto -X$,     
then any two Cartan subalgebras $\hg_1$ and $\hg_2$ of $\gg$ are 
\emph{$G$-conjugated} to each other. 
That is, there exists 
$g\in G$ such that $\Ad_G(g)\hg_1=\hg_2$.  
\end{theorem}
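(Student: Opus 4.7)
The plan is to adapt the classical proof of conjugacy of maximal tori in compact Lie groups (see, e.g., \cite{Kn96}), which combines an $\Ad(G)$-invariant inner product on $\gg$ with a compactness-based critical-point argument. First, since $G\subseteq U(n)$ and hence $\gg\subseteq\ug(n)$, I equip $\gg$ with the positive-definite inner product
$$\langle A,B\rangle:=-\Tr(AB)=\Tr(AB^*),$$
which takes real values on $\ug(n)$ and is $\Ad(G)$-invariant because the trace is invariant under conjugation. In particular $\ad(Z)$ is skew with respect to this form, so $\langle[Z,A],B\rangle=-\langle A,[Z,B]\rangle$ for all $A,B,Z\in\gg$.

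Next, I would produce a \emph{regular} element $X_i\in\hg_i$, meaning one with $Z_{\gg}(X_i)=\hg_i$. Since $\hg_i\subseteq\ug(n)$ is a commuting family of normal matrices, its complexification $\hg_{i,\CC}$ acts on $\gg_{\CC}$ by simultaneously diagonalizable operators, yielding a root-space decomposition $\gg_{\CC}=\gg_{\CC,0}\oplus\bigoplus_{\alpha\neq 0}\gg_{\CC,\alpha}$. A short argument using the maximality of $\hg_i$ as an involution-closed abelian subalgebra (together with the observation that $\RR W$ is trivially involution-closed for every $W\in\gg$) shows $Z_{\gg}(\hg_i)=\hg_i$, so $\gg_{\CC,0}=\hg_{i,\CC}$. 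Then any $X_i\in\hg_i$ outside the finite union of kernels of the nonzero roots $\alpha$ satisfies $Z_{\gg}(X_i)=\hg_i$.

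With regular elements $X_1\in\hg_1$ and $X_2\in\hg_2$ in hand, I define $f\colon G\to\RR$ by $f(g):=\langle\Ad(g)X_1,X_2\rangle$. Compactness of $G$ supplies a maximizer $g_0$, and differentiating $t\mapsto f(\exp(tZ)g_0)$ at $t=0$ gives $\langle[Z,\Ad(g_0)X_1],X_2\rangle=0$ for every $Z\in\gg$. Using skew-symmetry of $\ad$, this rewrites as $\langle\Ad(g_0)X_1,[X_2,Z]\rangle=0$, so $\Ad(g_0)X_1$ lies in the orthogonal complement of $[X_2,\gg]$ in $\gg$. That complement is precisely $Z_{\gg}(X_2)$, which by regularity of $X_2$ equals $\hg_2$; hence $\Ad(g_0)X_1\in\hg_2$.

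To conclude, since $\hg_2$ is abelian and contains $\Ad(g_0)X_1$, one has
$$\hg_2\subseteq Z_{\gg}(\Ad(g_0)X_1)=\Ad(g_0)Z_{\gg}(X_1)=\Ad(g_0)\hg_1,$$
and the right-hand side is itself a Cartan subalgebra of $\gg$ (being the image of $\hg_1$ under the involution-preserving Lie automorphism $\Ad(g_0)$, as $g_0\in U(n)$). Maximality of $\hg_2$ then forces equality. The main technical hurdle I foresee is not the critical-point calculation, which is routine, but rather the construction of regular elements, that is, verifying $Z_{\gg}(\hg_i)=\hg_i$ and setting up the root-space decomposition cleanly. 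The indispensable role played by compactness of $G$—which simultaneously produces the invariant inner product and the maximizer $g_0$—also foreshadows precisely why this argument cannot be transplanted directly into the operator-ideal setting treated in the sequel.
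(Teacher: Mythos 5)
Your argument is correct. The paper does not prove this statement itself but simply cites \cite[Th.~4.34]{Kn96} (together with the observation that compact linear groups are exactly the compact Lie groups), and what you have written out is precisely the classical invariant-inner-product/critical-point proof behind that citation --- including the reduction of ``Cartan subalgebra'' to ``maximal abelian subalgebra'' for the involution $X\mapsto -X$, the construction of regular elements with $Z_{\gg}(X_i)=\hg_i$ (Knapp's Lemma~4.33), and the maximization of $g\mapsto\langle\Ad(g)X_1,X_2\rangle$ --- so it takes essentially the same route.
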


\begin{proof}
See for instance \cite[Th. 4.34]{Kn96} and recall that the compact linear groups 
are precisely the compact Lie groups.
\end{proof}

\begin{example}\label{Un}
\normalfont 
We will illustrate the above discussion by the compact Lie group 
$$G=U(n)=\{V\in M_n(\CC)\mid V^*V=\1\},$$
whose Lie algebra is
$$\gg=\ug(n)=\{X\in M_n(\CC)\mid X^*=-X\}.$$
There is a $U(n)$-equivariant one-to-one correspondence between the Cartan subalgebras of $\gg$ 
and the \emph{complete flags} 
in $\CC^n$, that is, increasing families of linear subspaces 
$$\{0\}\subsetneqq\Vc_1\subsetneqq\cdots\subsetneqq\Vc_{n-1}\subsetneqq\Vc_n=\CC^n.$$
For this reason, the set of all Cartan subalgebras of $\gg$ 
(as well as of other Lie algebras) is called the `flag manifold' or `flag variety'. 
See for instance \cite{Wo98} and \cite{Vo08} for the differential geometry of 
the various flag manifolds and their role in the representation theory of reductive Lie groups. 

In the special case under consideration, the fact that any two maximal abelian subalgebras of $\gg$  
(i.e., Cartan subalgebras of $\gg$)
are mapped to each other by the unitary equivalence $X\mapsto VXV^*$ 
for a suitable $V\in U(n)$, is equivalent to the spectral theorem for skew-adjoint matrices 
(compare the remark after the statement of \cite[Th. 4.36]{Kn96}). 
More specifically, note that the set $\hg_0$ of all skew-adjoint diagonal matrices in $M_n(\CC)$ 
is a particular Cartan subalgebra of $\gg$.  
On the other hand, every element $X\in\gg=\ug(n)$ belongs to some maximal abelian subalgebra, say $\hg_1$, 
and then  $VXV^{-1}\in V\hg_1V^{-1}=\hg_0$ 
for some $V\in U(n)$ by Theorem~\ref{first_th}.  
That is, $VXV^{-1}$ is a diagonal matrix and we have thus obtained the spectral theorem for the skew-adjoint matrix~$X$. 
Conversely, if $\hg_0$ is as above and $\hg_1$ is any maximal abelian subalgebra of $\gg$, 
then there exists $X_1\in\hg_1$ for which we have $\hg_1=\{Y\in\gg\mid[X_1,Y]=0\}$ 
(see \cite[Lemma 4.33]{Kn96}). 
It follows by the spectral theorem for $X_1\in\hg_1\subseteq\gg=\ug(n)$ that there exists 
$V\in U(n)$ such that $VX_1V^{-1}\in\hg_0$, and this easily implies $V\hg_1V^{-1}\subseteq\hg_0$ 
because of the way $X_1$ was chosen 
(compare the proof of \cite[Th. 4.34]{Kn96}). 
Since both $\hg_1$ and $\hg_0$ are maximal abelian subalgebras of $\gg$, it then follows that $V\hg_1V^{-1}=\hg_0$, 
hence the Cartan subalgebras $\hg_1$ and $\hg_0$ are $U(n)$-conjugated to each other. 
\end{example}

\newpage

\section{Lie theory for some infinite-dimensional algebraic groups}\label{sect3}

\subsection*{Infinite-dimensional linear algebraic reductive groups} 
The notion of linear algebraic group in infinite dimensions requires 
the following terminology. 
If $\Ac$ is a real Banach space, 
then a vector-valued continuous polynomial function on $\Ac$ of degree $\le n$
is a function $p\colon\Ac\to\Vc$, where $\Vc$ is another real Banach space, 
such that for some continuous $k$-linear maps 
$$\psi_k\colon
\underbrace{\Ac\times\cdots\times\Ac}_{\text{$k$ times}}
\to\Vc$$
(for $k=0,1,\ldots,n$) we have
$p(a)=\psi_n(a,\ldots,a)+\cdots+\psi_1(a)+\psi_0$ 
for every $a\in\Ac$, where $\psi_0\in\Vc$. 

Now let $\Bg$ be a real \emph{associative} unital Banach algebra, 
hence a real Banach space endowed with a bounded bilinear mapping $\Bg\times\Bg\to\Bg$, $(x,y)\mapsto xy$  
which is associative and admits a unit element $\1\in\Bg$.
Then the set 
$$\Bg^\times:=\{x\in\Bg\mid(\exists y\in\Bg)\ xy=yx=\1\}$$
is an open subset of $\Bg$ and 
has the natural structure of a Banach-Lie group (\cite[Example 6.9]{Up85}). 
The Lie algebra of $\Bg^\times$ is 
again the Banach space $\Bg$, viewed however as a \emph{nonassociative} Banach algebra, 
more precisely as a Banach-Lie algebra whose Lie bracket is the bounded bilinear mapping 
$\Bg\times\Bg\to\Bg$, $(x,y)\mapsto xy-yx$. 

\begin{definition}\label{closed}
\normalfont
If $\Bg$ is a real associative unital Banach algebra and 
$G$ is a closed subgroup of $\Bg^\times$, then 
the \emph{Lie algebra of $G$} is 
$$\gg:=\{x\in\Bg\mid(\forall t\in\RR)\quad\exp(tx)\in G\}.$$
\end{definition}

\begin{remark}\label{closed_alg}
\normalfont
In the setting of Definition~\ref{closed}, 
the set $\gg$ is a closed Lie subalgebra of $\Bg$ 
(\cite[Corollary 6.8]{Up85}). 

In fact, since $G$ is a closed subset of $\Bg^\times$, it is easily seen that $\gg$ is closed in $\Bg$. 
Moreover, by using the well-known formulas (\cite[Proposition 6.7]{Up85})
$$\begin{aligned}
\exp(t(x+y))&=\lim_{k\to\infty}\Bigl(\exp(\frac{t}{k}x)\exp(\frac{t}{k}y)\Bigr)^k, \\
\exp(t^2[x,y])&=\lim_{k\to\infty}\Bigl(\exp(\frac{t}{k}x)\exp(\frac{t}{k}y)\exp(-\frac{t}{k}x)\exp(-\frac{t}{k}y)\Bigr)^{k^2}
\end{aligned}$$
which hold true for all $x,y\in\Bg$ and $t\in\RR$, 
it follows that for every $x,y\in\gg$ we have $x+y\in\gg$ and $[x,y]\in\gg$. 
Then it is easy to check that $\gg$ is a linear subspace of $\Bg$. 

Moreover, if $\Bg$ is endowed with a continuous involution such that for every $b\in G$ we have $b^*\in G$, 
then for every $x\in\gg$ we have $x^*\in\gg$.
\end{remark}

\begin{definition}[\cite{HK77}]\label{algebraic}
\normalfont
Let $\Bg$ be a real 
associative unital Banach algebra, $n$ be a positive integer, 
and $G$ be  a subgroup of $\Bg^\times$. 
We say that $G$ is an 
\emph{algebraic group in $\Bg$ of degree $\le n$}
if we have 
$$G=\{b\in \Bg^\times\mid (\forall p\in\Pc)\quad p(b,b^{-1})=0\}$$
for some set $\Pc$ of vector-valued continuous polynomial functions on $\Bg\times\Bg$. 
Note that $G$ is a closed subgroup of $\Bg^\times$, hence its Lie algebra can be defined 
as in Definition~\ref{closed}. 

If moreover $\Bg$ is endowed with a continuous involution $b\mapsto b^*$ 
and for every $b\in G$ we have $b^*\in G$, then we say that the group $G$ is \emph{reductive}. 
\end{definition}

\begin{definition}\label{algebraic_restr}
\normalfont
Let $\Bg$ be a real 
associative unital Banach algebra, $n$ be a positive integer, 
and $G$ be an algebraic subgroup of $\Bg^\times$ of degree $\le n$ 
with the Lie algebra $\gg$ ($\subseteq\Bg$). 
Then for every one-sided ideal $\Ig$ of $\Bg$,
the corresponding 
\emph{$\Ig$-restricted} algebraic group is  
$$G_{\Ig}:=G\cap(\1+\Ig)$$
and the \emph{Lie algebra of $G$} is $\gg_{\Ig}:=\gg\cap\Ig$.  
%
\end{definition}

\begin{remark}\label{simple}
\normalfont 
Here are some simple remarks
 on algebraic structures that occur in the preceding definition.  
Let $\Bg$ be a unital ring and $\Ig$ be a one-sided ideal of $\Bg$. 
 \begin{enumerate}
\item\label{simple_item1} The set
$\Bg^\times\cap(\1+\Ig)$ is always a subgroup of the group $\Bg^\times$ 
of invertible elements in $\Bg$. 

To see this, let us assume for instance that $\Ig\Bg\subseteq\Ig$. 
Then $\Ig\Ig\subseteq\Ig$, hence $(\1+\Ig)(\1+\Ig)\subseteq\1+\Ig$, 
and thus $(\1+\Ig)\cap\Bg^\times$ is closed under the product. 
On the other hand, if $x\in\Ig$, $b\in\Bg$ and $(\1+x)b=\1$, then $x=\1-xb\in\1+\Ig\Bg\subseteq\1+\Ig$, 
hence $(\1+\Ig)\cap\Bg^\times$ is also closed under the inversion. 

\item\label{simple_item2} 
By definition, every one-sided ideal of a real algebra is assumed to be a real linear subspace. 
Therefore, if the unital ring $\Bg$ has the structure of a real algebra, 
then $\Ig$ is an associative subalgebra of $\Bg$ and in particular $\Ig$ 
has the natural structure of a real Lie algebra with the Lie bracket 
defined by $[x,y]:=xy-yx$ for all $x,y\in\Ig$. 

\item\label{simple_item3} If $\Bg$ is a ring endowed with an involution $b\mapsto b^*$ and $\Ig$ is 
a self-adjoint one-sided ideal of $\Bg$, then $\Ig$ is actually a two-sided ideal. 

In fact, if we assume for instance $\Ig\Bg\subseteq\Ig$, 
then for every $x\in\Ig$ and $b\in\Bg$ we have $x^*b^*\in\Ig$ hence $bx=(x^*b^*)^*\in\Ig$, 
and thus $\Bg\Ig\subseteq\Ig$ as well. 
\end{enumerate}
\end{remark}

%
%

\begin{theorem}\label{alg_indeed}
Let $\Bg$ be a real associative unital Banach algebra with a one-sided ideal $\Ig$. 
If $G_{\Ig}$ is an $\Ig$-restricted algebraic group in $\Bg$, 
then its Lie algebra is a Lie subalgebra of $\Ig$ and can be described as 
$$\gg_{\Ig}=\{x\in \Bg\mid(\forall t\in\RR)\quad \exp(tx)\in G_{\Ig}\}.$$ 
\end{theorem}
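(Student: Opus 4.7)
The plan is to prove the two assertions by unpacking the definitions and exploiting a single algebraic identity for $\exp(tx)-\1$. Throughout, let $\gg\subseteq\Bg$ denote the Lie algebra of the ambient algebraic group $G$ in the sense of Definition~\ref{closed}, so that $\gg_{\Ig}=\gg\cap\Ig$ by Definition~\ref{algebraic_restr}. Without loss of generality I assume that $\Ig$ is a right ideal; the left-ideal case is symmetric.

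For the first assertion, I would combine two facts already at our disposal. Remark~\ref{closed_alg} says that $\gg$ is a Lie subalgebra of $\Bg$ (closed under commutators), and Remark~\ref{simple}\eqref{simple_item2} says that the one-sided ideal $\Ig$ is an associative subalgebra of $\Bg$, hence also a Lie subalgebra under the commutator bracket. Intersecting, $\gg\cap\Ig$ is a Lie subalgebra of both $\gg$ and $\Ig$, so $\gg_{\Ig}$ is a Lie subalgebra of $\Ig$ as claimed.

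For the description of $\gg_{\Ig}$, the key observation is the factorization
$$\exp(tx)-\1=tx\cdot\varphi(tx)=\varphi(tx)\cdot tx,\qquad \varphi(z):=\sum_{k\ge 0}\frac{z^k}{(k+1)!},$$
which holds in any unital Banach algebra and in which $x$ commutes with $\varphi(tx)$. The forward inclusion is then immediate: if $x\in\gg\cap\Ig$ and $t\in\RR$, then $\exp(tx)\in G$ because $x\in\gg$, and $\exp(tx)-\1=(tx)\cdot\varphi(tx)\in\Ig\cdot\Bg\subseteq\Ig$ because $x\in\Ig$; thus $\exp(tx)\in G\cap(\1+\Ig)=G_{\Ig}$.

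For the reverse inclusion, suppose $\exp(tx)\in G_{\Ig}$ for every $t\in\RR$. Since $\exp(tx)\in G$ for every $t\in\RR$, Definition~\ref{closed} yields $x\in\gg$, so it only remains to show $x\in\Ig$. Set $f(t):=\int_0^t\exp(sx)\,\de s$, so that $\exp(tx)-\1=x\cdot f(t)=f(t)\cdot x$. Then $f(t)/t=\int_0^1\exp(stx)\,\de s\to\1$ as $t\to0$; since the invertible elements form an open subset of $\Bg$, there exists $t_0\ne0$ with $f(t_0)/t_0\in\Bg^{\times}$. By hypothesis $\exp(t_0x)-\1\in\Ig$, and therefore
$$t_0x=\bigl(\exp(t_0x)-\1\bigr)\cdot\bigl(f(t_0)/t_0\bigr)^{-1}\in\Ig\cdot\Bg\subseteq\Ig,$$
which gives $x\in\Ig$ as needed. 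The main subtlety I would flag is exactly this step: because $\Ig$ is only assumed to be an algebraic (one-sided) ideal, not a closed subspace of $\Bg$, one cannot simply take $x=\lim_{t\to0}t^{-1}(\exp(tx)-\1)$ and invoke closedness; the factorization through $f(t)$ together with the openness of $\Bg^{\times}$ bypasses this difficulty and is the decisive computational ingredient.
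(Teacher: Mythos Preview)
Your argument is correct. The paper itself does not prove this statement here; it simply refers the reader to the companion paper \cite{BPW13}, so there is no in-paper proof to compare against. Your approach---factoring $\exp(tx)-\1=tx\cdot\varphi(tx)$ with $\varphi(0)=\1$ and then using the openness of $\Bg^\times$ to invert $\varphi(t_0x)$ for small $t_0$---is exactly the natural device needed here, and the subtlety you flag (that $\Ig$ is not assumed closed, so one cannot merely take the limit $x=\lim_{t\to0}t^{-1}(\exp(tx)-\1)$) is precisely what distinguishes this situation from the standard Banach--Lie setting covered by Remark~\ref{closed_alg}.
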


\begin{proof}
See \cite{BPW13}. 
\end{proof}


\subsection*{Classical groups and Lie algebras in infinite dimensions}

We will now provide several examples of linear algebraic reductive groups 
associated with operator ideals, by way of illustrating Definition~\ref{algebraic_restr}. 
To this end we elaborate on an idea from \cite[Probl. 3.4]{Be09} 
by introducing the classical groups and 
Lie algebras associated to an arbitrary operator ideal.  
The ones associated with 
the Schatten ideals $\Sg_p(\Hc)$ ($1\le p\le\infty$) 
were first systematically studied in \cite{dlH72}.

\begin{definition}\label{complex_gr}
\normalfont
Let $\Ig$ be an arbitrary ideal in $\Bc(\Hc)$. 
We define the following groups and complex Lie algebras:  
\begin{itemize}
\item[{\rm(A)}] the \emph{complex general linear group} 
$$\GL_{\Ig}(\Hc)=\GL(\Hc)\cap(\1+\Ig)$$
with the Lie algebra 
$$
\gl_{\Ig}(\Hc):=\Ig;$$ 
\item[{\rm(B)}] the \emph{complex orthogonal group} 
$$\OO_{\Ig}(\Hc):=\{T\in\GL_{\Ig}(\Hc)\mid T^{-1}=JT^*J^{-1}\}$$
with the Lie algebra 
$$
\og_{\Ig}(\Hc):=\{X\in\Ig\mid X=-JX^*J^{-1}\},$$ 
where $J\colon\Hc\to\Hc$ is a conjugation 
(i.e., $J$ a conjugate-linear isometry satisfying $J^2=\1$); 
\item[{\rm(C)}] the \emph{complex symplectic group} 
$$\Sp_{\Ig}(\Hc):=\{T\in\GL_{\Ig}(\Hc)\mid 
 T^{-1}=\widetilde{J}T^*\widetilde{J}^{-1}\}$$ 
with the Lie algebra 
$$
\sp_{\Ig}(\Hc):=\{X\in\Ig\mid X=-\widetilde{J}X^*\widetilde{J}^{-1}\},$$ 
where $\widetilde{J}\colon\Hc\to\Hc$ is an anti-conjugation 
(i.e., $\widetilde{J}$ a conjugate-linear isometry satisfying 
$\widetilde{J}^2=-\1$).
\end{itemize}
We shall say that $\GL_{\Ig}(\Hc)$, $\OO_{\Ig}(\Hc)$, and $\Sp_{\Ig}(\Hc)$ 
are the \textit{classical complex groups associated with  
the operator ideal $\Ig$}. 
Similarly, the corresponding Lie algebras 
we call the \textit{classical complex Lie algebras} 
(associated with $\Ig$). 
%
\end{definition}

\begin{definition}\label{real_gr}
\normalfont
We shall use the notation of Definition~\ref{complex_gr} 
and define the following groups and real Lie algebras 
associated to the operator ideal~$\Ig$:  
\begin{itemize}
\item[{\rm(AI)}] the \emph{real general linear group} 
$$\GL_{\Ig}(\Hc;{\mathbb R})=\{T\in\GL_{\Ig}(\Hc)\mid TJ=JT\}$$ 
with the Lie algebra 
$$
\gl_{\Ig}(\Hc;{\mathbb R})
:=\{X\in\Ig\mid XJ=JX\},$$ 
where $J\colon\Hc\to\Hc$ is any conjugation on $\Hc$; 
\item[{\rm(AII)}] the \emph{quaternionic general linear group} 
$$\GL_{\Ig}(\Hc;{\mathbb H})=\{T\in\GL_{\Ig}(\Hc)\mid 
 T\widetilde{J}=\widetilde{J}T\}$$ 
with the Lie algebra 
$$
\gl_{\Ig}(\Hc;{\mathbb H})
:=\{X\in\Ig\mid X\widetilde{J}=\widetilde{J}X\},$$ 
where $\widetilde{J}\colon\Hc\to\Hc$ is any anti-conjugation on $\Hc$, 
which defines on $\Hc$ the structure of a vector space over the quaternion field ${\mathbb H}$  
such that the operators in $\GL_{\Ig}(\Hc;{\mathbb H})$ and $\gl_{\Ig}(\Hc;{\mathbb H})$ are ${\mathbb H}$-linear;
\item[{\rm(AIII)}] the \emph{pseudo-unitary group} 
$$\U_{\Ig}(\Hc_{+},\Hc_{-}):=\{T\in\GL_{\Ig}(\Hc)\mid T^*VT=V\}$$
with the Lie algebra 
$$
\ug_{\Ig}(\Hc_{+},\Hc_{-})
:=\{X\in\Jg\mid X^*V=-VX\},$$
where $\Hc=\Hc_{+}\oplus\Hc_{-}$ and 
$V=\begin{pmatrix} \hfill 1 & \hfill 0 \\ \hfill 0 & \hfill -1\end{pmatrix}$ 
with respect to 
this orthogonal direct sum decomposition of $\Hc$; 
\item[{\rm(BI)}] the \emph{pseudo-orthogonal group} 
$$\OO_{\Ig}(\Hc_{+},\Hc_{-}):=\{T\in\GL_{\Ig}(\Hc)\mid T^{-1}=JT^*J^{-1} 
\text{ and }g^*Vg=V\}$$
with the Lie algebra 
$$
\og_{\Ig}(\Hc_{+},\Hc_{-})
:=\{X\in\Jg\mid X=-JX^*J^{-1}\text{ and }X^*V=-VX\},$$
where $\Hc=\Hc_{+}\oplus\Hc_{-}$, 
$V=\begin{pmatrix} \hfill 1 & \hfill 0 \\ \hfill 0 & \hfill -1\end{pmatrix}$ 
with respect to 
this orthogonal direct sum decomposition of $\Hc$, 
and $J\colon\Hc\to\Hc$ is a conjugation on $\Hc$ 
such that $J(\Hc_{\pm})\subseteq\Hc_{\pm}$; 
\item[{\rm(BII)}]
$\OO^*_{\Ig}(\Hc):=\{T\in\GL_{\Ig}(\Hc)\mid T^{-1}=JT^*J^{-1} 
\text{ and }g\widetilde{J}=\widetilde{J}g\}$ 
with the Lie algebra 
$$
\og^*_{\Ig}(\Hc)
:=\{X\in\Jg\mid X=-JX^*J^{-1}\text{ and }X\widetilde{J}=\widetilde{J}X\},$$
where  $J\colon\Hc\to\Hc$ is a conjugation and 
$\widetilde{J}\colon\Hc\to\Hc$ is an anti-conjugation
such that $J\widetilde{J}=\widetilde{J}J$; 
\item[{\rm(CI)}] 
$\Sp_{\Ig}(\Hc;{\mathbb R}):=\{T\in\GL_{\Ig}(\Hc)\mid 
T^{-1}=\widetilde{J}T^*\widetilde{J}^{-1}\text{ and }TJ=JT\}$ 
with the Lie algebra 
$$\sp_{\Ig}(\Hc;{\mathbb R}):=\{X\in\Ig\mid 
-X=\widetilde{J}X^*\widetilde{J}^{-1}\text{ and }XJ=JX\},
$$
where $\widetilde{J}\colon\Hc\to\Hc$ is any anti-conjugation and  
$J\colon\Hc\to\Hc$ is any conjugation such that 
$J\widetilde{J}=\widetilde{J}J$; 
\item[{\rm(CII)}] 
$\Sp_{\Ig}(\Hc_{+},\Hc_{-}):=\{T\in\GL_{\Ig}(\Hc)\mid 
T^{-1}=\widetilde{J}T^*\widetilde{J}^{-1}\text{ and }T^*VT=V\}$ 
with the Lie algebra 
$$
\sp_{\Ig}(\Hc_{+},\Hc_{-})
:=\{X\in\Jg\mid X=-\widetilde{J}X^*\widetilde{J}^{-1}\text{ and }X^*V=-VX\},$$
where $\Hc=\Hc_{+}\oplus\Hc_{-}$, 
$V=\begin{pmatrix} \hfill 1 & \hfill 0 \\ \hfill 0 & \hfill -1\end{pmatrix}$ 
with respect to 
this orthogonal direct sum decomposition of $\Hc$, 
and $\widetilde{J}\colon\Hc\to\Hc$ is an anti-conjugation on $\Hc$ 
such that $\widetilde{J}(\Hc_{\pm})\subseteq\Hc_{\pm}$.
\end{itemize}\quad \\

\noindent We say  
$\GL_{\Ig}(\Hc;{\mathbb R})$, $\GL_{\Ig}(\Hc;{\mathbb H})$, 
$\U_{\Ig}(\Hc_{+},\Hc_{-})$, 
$\OO_{\Ig}(\Hc_{+},\Hc_{-})$, $\OO^*_{\Ig}(\Hc)$, 
$\Sp_{\Ig}(\Hc;{\mathbb R})$, and $\Sp_{\Ig}(\Hc_{+},\Hc_{-})$
are the \textit{classical real groups associated with
the operator ideal $\Ig$}. 
Similarly, the corresponding Lie algebras 
are called the \textit{classical real Lie algebras} 
(associated with $\Ig$). 

If any of the subspaces $\Hc_{+}$ or $\Hc_{-}$ is equal to $\{0\}$ 
then we will omit it from the notation of any of the above groups and Lie algebras 
of type (AIII), (BI), and (CII). 
For instance, if $\Hc_{-}=\{0\}$ (hence $\Hc_{-}=\Hc$ and $V=\1$), 
then we will write 
$$U_{\Ic}(\Hc):=U(\Hc)\cap(\1+\Ic)$$
and so on. 
\end{definition}

\begin{remark}\label{class_dlH}
\normalfont
As a by-product of 
the classification of the $L^*$-algebras 
(see for instance Theorems 7.18~and~7.19 in \cite{Be06}), 
every (real or complex) topologically simple $L^*$-algebra 
is isomorphic to one of
the classical Banach-Lie algebras associated with 
the Hilbert-Schmidt ideal $\Jg=\Sg_2(\Hc)$.
\end{remark}

\begin{example}\label{ideals_ex}
\normalfont
We will now illustrate the wide variety of examples that prompted us to search for an alternative to the study of operator ideals in the framework of the Lie theory for Banach-Lie groups.
We recall that if an operator ideal carries a complete algebra norm that is stronger than the operator norm and for which the natural involution $T\mapsto T^*$ is continuous, then the classical groups associated with that ideal have natural structures of Banach-Lie groups. That is, studies on classical groups associated with operator ideals required the ideals to be endowed with complete norms which are moreover algebra norms (i.e., submultiplicative) and for which the involution $T\mapsto T^*$ is continuous. The submultiplicativity of the norm implies that all rank one projections, since they are unitarily equivalent, have norm precisely 1. 
From this we can easily construct ideals that lie outside this class of special complete normed ideals.

The simplest example of such an ideal that lacks a complete norm of this sort is the finite rank ideal $\Fc(\Hc)$, also a principal ideal generated by any nonzero finite rank operator.
To see that this ideal is not a complete normed ideal of this type, assume otherwise. 
Every rank one projection operator is contained in $\Fc(\Hc)$
and so also is each rank one projection and hence norm 1 operator $P_n := \diag (0, \dots , 0, 1, 0, \dots )$ 
where 1 is in the $n^{\rm th}$ position.
But then $X = \sum\limits_{n\ge 1} 2^{-n} P_n$ is an absolutely convergent series and hence converges in $\Fc(\Hc)$. 
Since this norm is stronger than the operator norm 
(i.e., dominates a constant multiple of the operator norm), 
this convergence is also in the operator norm.
But then completeness implies $X = \diag (2^{-n}) \in \Fc(\Hc)$, a contradiction because $X$ has infinite rank.
Moreover, since all nonzero ideals contain $\Fc(\Hc)$, 
this argument shows $X$ must be an operator in any complete normed ideal of this type.
For any $Y = \diag (y_n) \ge 0$ for which $y_n = O(2^{-n^2})$, 
using Calkin's ideal characteristic set characterization, it is elementary to show
that no principal ideal generated by such an operator $Y$ contains $X$ 
and hence fails to be complete normable of this type.
With a little care using Calkin's characteristic axioms, 
one can insure cardinality $c$ distinct such principal ideals. 
This holds despite that unequivalent sequences can generate in this way identical principal ideals.
See also \cite{Va89} and \cite[Banach ideals Section 4.5]{DFWW04} for additional information on examples of this type. 
\end{example}

\section{On the conjugation of Cartan subalgebras}\label{sect4} 

The aim of this last section is to discuss the following question: 

\begin{question}\label{first_quest}
\normalfont
To what extent Theorem~\ref{first_th} 
holds true when the corresponding finite-dimensional Lie groups and Lie algebras are replaced by 
$$G=U_{\Ic}(\Hc)\text{ and } \gg=\ug_{\Ic}(\Hc)$$ 
where we use the notation of Definition~\ref{real_gr}? 
\end{question}
  
The point is that here we look for conjugacy results involving the smaller unitary group~$U_{\Ic}(\Hc)$ 
rather than the full unitary group $U(\Hc)$. 
We recall that the variant of the above question with $G=U(\Hc)$ was addressed in \cite[page 33]{dlH72}
for the ideal of finite-rank operators, 
and in \cite[page 93]{dlH72}  
when $\gg=\Ic$ is one of the Schatten ideals. 
That argument based on the spectral theorem actually carries over directly 
to more general operator ideals  
and leads to the following infinite-dimensional version of Theorem~\ref{first_th}: 
\emph{if an operator ideal satisfies $\Ic\subsetneqq\Bc(\Hc)$, then 
for any two Cartan subalgebras $\Cc_1$ and $\Cc_2$ of $\Ic$ 
there exists $V\in U(\Hc)$ such that the corresponding unitary equivalence $X\mapsto VXV^*$ 
maps $\Cc_1$ onto $\Cc_2$}. 

The answer to Question~\ref{first_quest} is obvious if $\Ic=\{0\}$ and 
is also well known in the case $\Ic=\Bc(\Hc)$; 
the relevant facts are recalled in Remark~\ref{first_rem} below. 
Let us also note that problems similar to Question~\ref{first_quest} 
could be raised in connection with the other classical groups associated with operator ideals 
from Definitions \ref{complex_gr} and \ref{real_gr}, 
and more generally about various infinite-dimensional versions of the reductive Lie groups 
(see \cite{dlH72}, \cite{Al82}, and also \cite{Be11}). 

\subsection*{The maximal abelian self-adjoint subalgebras of $\Bc(\Hc)$}
We now recall the relevant facts concerning 
Question~\ref{first_quest} in the case of the operator ideal $\Ic=\Bc(\Hc)$.  
In this case we will avoid talking about Cartan subalgebras, however.  
The reason is that if the ideal $\Ic$ is equal to $\Bc(\Hc)$, then it is also a von Neumann algebra, 
and in the theory of operator algebras the name `Cartan subalgebras' is reserved for 
maximal self-adjoint subalgebras that satisfy an extra condition 
(see for instance \cite{SS08} and \cite{Re08}). 
Therefore we will not use that name throughout the next remark. 

\begin{remark}\label{first_rem}
\normalfont
For $n=0,1,2,\dots,\infty,-\infty$ let us define the probability measure space 
$$\Xi_n=\begin{cases}
[0,1] \text{ if }n=0,\\
[0,1/2]\cup\{1,\dots,n\}\text{ if }1\le n<\infty,\\
\{1,2,\dots\}\text{ if }n=\infty, \\
[0,1/2]\cup\{1,2,\dots\}\text{ if }n=-\infty
\end{cases} $$
endowed with the probability measure $\mu_n$, where 
$\mu_n$ is the Lebesgue measure for $n=0$, 
$\mu_n$ is the Lebesgue measure on $[0,1/2]$ and $\mu_n(k)=1/2n$ if $1\le k\le n<\infty$, 
moreover $\mu_\infty(k)=1/2^k$ if $k\ge 1$, 
and finally $\mu_{-\infty}$ is the Lebesgue measure on $[0,1/2]$ and $\mu_{-\infty}(k)=1/2^{k+1}$ if $1\le k<\infty$. 

Thus $L^2(\Xi_n,\mu_n)$ is a separable infinite-dimensional complex Hilbert space for $n=0,1,2,\dots,\infty,-\infty$.
If we embed $L^\infty(\Xi_n,\mu_n)$ into $\Bc(L^2(\Xi_n,\mu))$ as multiplication operators, 
then we obtain a maximal abelian self-adjoint subalgebra $\Ac_n$. 
By looking at their minimal projections, we can see that the abelian von Neumann algebras $\Ac_n$ and $\Ac_m$ are non-isomorphic to each other if $n\ne m$. 
Therefore, if $\Theta_n\colon L^2(\Xi_n,\mu_n)\to\Hc$ is any unitary operator 
and we define $\Cc_n:=\Theta_n\Ac_n\Theta_n^{-1}\subseteq\Bc(\Hc)$, 
then $\{\Cc_n\mid n=0,1,2,\dots,\infty,-\infty\}$ is a family of maximal abelian self-adjoint subalgebras of $\Bc(\Hc)$ 
which are pairwise nonisomorphic. 
Thus there exist infinitely many conjugacy classes of  
maximal abelian self-adjoint subalgebras of $\Bc(\Hc)$. 

In fact, by using Maharam's theorem on homogeneous measure algebras 
along with the basic properties of abelian von Neumann algebras 
(see \cite{Ma42} and \cite[App. IV]{Di69}), 
one can see that 
\emph{the family $\{\Cc_n\mid n=0,1,2,\dots,\infty,-\infty\}$ is 
a complete system of distinct representatives for 
the $U(\Hc)$-conjugacy classes of maximal abelian self-adjoint subalgebras of $\Bc(\Hc)$}. 
\end{remark}

\subsection*{The role of $U_{\Ic}(\Hc)$-diagonalization}\label{u-diag} 
Recall from Example~\ref{Un} that in the case of the compact linear group $U(n)$  Theorem~\ref{first_th} 
is equivalent to the fact that every skew-symmetric matrix is unitarily equivalent to a diagonal matrix. 
In view of that fact, it is easy to see that in order to be able to address Question~\ref{first_quest} 
we need to understand the set of $U_{\Ic}(\Hc)$-diagonalizable operators in $\Ic$ defined below. 

Assume that we have fixed an orthonormal basis $\bb=\{\bb_n\}_{n\ge 1}$ in $\Hc$ 
and denote by $\Dc$ the corresponding set of diagonal operators in $\Bc(\Hc)$. 
Let $\Ic$ be an operator ideal in $\Bc(\Hc)$. 
The set of 
\emph{$U_{\Ic}(\Hc)$-diagonalizable operators in $\Ic$} is
$$\Dc_{\Ic}:=\{VDV^*\mid D\in\Dc\cap\Ic,\, V\in U_{\Ic}(\Hc)\}
=\bigcup_{V\in U_{\Ic}(\Hc)}V(\Dc\cap\Ic)V^*\subseteq\Ic.$$ 
Here we have the union of the sets in the $U_{\Ic}(\Hc)$-conjugacy class of the Cartan subalgebra $\Dc\cap\Ic$ of $\Ic$. 
This is a set of normal operators in $\Ic$ and 
we will also consider its self-adjoint part 
$$\Dc_{\Ic}^{\sa}:=\Dc_{\Ic}\cap\Ic^{\sa}=\{VDV^*\mid D=D^*\in\Dc\cap\Ic,\, V\in U_{\Ic}(\Hc)\}.$$
It follows from Proposition~\ref{extremes}\eqref{extremes_item1} below  
that 
\begin{equation}\label{df}
\text{ if }\Ic=\Fc(\Hc),\text{ then }\Dc_{\Ic}^{\sa}=\Ic^{\sa}
\end{equation}
however we will prove that the latter equality fails to be true 
for any other nontrivial ideal; see Proposition~\ref{conj} below. 

The geometric shape of the set $\Dc_{\Ic}^{\sa}$ is not clear in general, 
for instance Remark~\ref{WvN} shows that it need not be a linear subspace of $\Ic^{\sa}$. 
In the case when $\Ic\ne\Ic^2$, some information on the shape and size of $\Dc_{\Ic}$ 
is discussed in Remark~\ref{info} below.

\begin{proposition}\label{extremes} 
We have the following descriptions of the set of $U_{\Ic}(\Hc)$-diag\-onalizable 
operators when $\Ic$ is the smallest or the largest operator ideal in $\Bc(\Hc)$:
\begin{enumerate}
\item\label{extremes_item1}
If $\Ic=\Fc(\Hc)$, then $\Dc_{\Ic}$ is the set of all finite-rank normal operators in $\Bc(\Hc)$. 
\item\label{extremes_item2}
If $\Ic=\Bc(\Hc)$, then $\Dc_{\Ic}$ is the set of all normal operators in $\Bc(\Hc)$ 
with pure point spectrum, in the sense that their spectral measure is supported by 
the countable subset of the spectrum consisting of the eigenvalues. 
For such a normal operator its eigenvalues are everywhere dense in its spectrum.
\end{enumerate}
\end{proposition}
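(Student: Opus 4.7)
The plan is to treat the two cases separately; in each, one inclusion is a direct consequence of the construction of $\Dc_{\Ic}$ while the other requires diagonalization via a carefully chosen unitary.

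For \eqref{extremes_item1}, the inclusion $\Dc_{\Fc(\Hc)}\subseteq\{\text{finite-rank normal operators}\}$ is immediate, since for $V\in U_{\Fc(\Hc)}(\Hc)$ and $D\in\Dc\cap\Fc(\Hc)$ the operator $VDV^*$ is normal and has the same finite rank as $D$. For the reverse inclusion, I would start from a finite-rank normal operator $N$ and apply the finite-dimensional spectral theorem on the finite-dimensional subspace $\Ker(N)^\perp$ to obtain an orthonormal eigenbasis $\{e_1,\ldots,e_k\}$ with eigenvalues $\lambda_1,\ldots,\lambda_k$. The crux is then to produce $V\in U_{\Fc(\Hc)}(\Hc)$ with $V\bb_j=e_j$ for $1\le j\le k$: letting $M$ denote the linear span of $\bb_1,\ldots,\bb_k,e_1,\ldots,e_k$, I would extend each of the two orthonormal systems to an orthonormal basis of the finite-dimensional space $M$, define $V$ on $M$ by matching these bases, and extend $V$ by the identity on $M^\perp$. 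Then $V-\1$ is supported on the finite-dimensional subspace $M$, so $V\in\1+\Fc(\Hc)$, and $V^*NV$ is the diagonal operator with entries $\lambda_1,\ldots,\lambda_k,0,0,\ldots$, establishing $N\in\Dc_{\Fc(\Hc)}$.

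For \eqref{extremes_item2}, I would first note that $U_{\Bc(\Hc)}(\Hc)=U(\Hc)$ and $\Dc\cap\Bc(\Hc)=\Dc$, so $\Dc_{\Bc(\Hc)}$ coincides with the set of operators unitarily equivalent to some diagonal operator with respect to the basis $\bb$. If $N=VDV^*$ with $D=\diag(d_n)$, then $\{V\bb_n\}_{n\ge 1}$ is an orthonormal basis of eigenvectors of $N$ with eigenvalues $d_n$, and consequently the spectral measure of $N$ is atomic and supported on the countable set $\{d_n\}$ of eigenvalues. Conversely, if $N$ is normal with pure point spectrum, the spectral theorem for normal operators supplies an orthonormal basis $\{v_n\}$ of eigenvectors of $N$ with corresponding eigenvalues $\lambda_n$, and the unitary $V$ defined by $V\bb_n=v_n$ satisfies $V^*NV=\diag(\lambda_n)\in\Dc$. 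Finally, the density assertion follows from the elementary fact that for any bounded diagonal operator $\diag(d_n)$ on $\Hc$ one has $\sigma(\diag(d_n))=\overline{\{d_n\mid n\ge 1\}}$, combined with the unitary invariance of the spectrum.

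The only step that requires even modest care is the matching construction of $V$ in \eqref{extremes_item1}: one has two orthonormal systems of the same finite cardinality sitting inside a common finite-dimensional subspace of $\Hc$, and one must assemble from them a unitary on all of $\Hc$ that equals $\1$ off that subspace. This is a purely finite-dimensional linear-algebra step, but it is the only place where the size restriction built into $U_{\Fc(\Hc)}(\Hc)$ plays a role; the reason \eqref{extremes_item1} cannot be upgraded to larger ideals is precisely that the corresponding diagonalization may require a unitary $V$ for which $V-\1$ no longer lies in the prescribed ideal, a phenomenon presumably explored in Proposition~\ref{conj}.
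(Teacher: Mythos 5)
Your proposal is correct and follows essentially the same route as the paper: the same finite-dimensional matching construction of the unitary $W$ supported on the span of $\{\bb_1,\dots,\bb_k\}\cup\{e_1,\dots,e_k\}$ in part \eqref{extremes_item1}, and the same reduction of part \eqref{extremes_item2} to the equivalence between pure point spectrum and the existence of an orthonormal eigenbasis. The only place you are terser than the paper is the converse in \eqref{extremes_item2}, where the paper explicitly justifies the orthonormal eigenbasis via the mutual orthogonality of eigenspaces, the countability of the eigenvalue set (by separability), and the countable additivity identity $\1=\sum_{\lambda\in\Lambda}E_T(\{\lambda\})$; you subsume this under ``the spectral theorem,'' which is a harmless compression.
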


\begin{proof} 
\eqref{extremes_item1}
We have already noted above that the set $\Dc_{\Fc(\Hc)}$ 
consists of finite-rank normal operators. 
Conversely, for any finite-rank normal operator $A\in\Bc(\Hc)$ 
there exist  $\lambda_1,\dots,\lambda_n\in\CC$ 
and an orthonormal system $\{v_1,\dots,v_n\}$ such that 
$A=\sum_{k=1}^n\lambda_k(\cdot,v_k)v_k$, where the integer $n\ge 1$ depends on~$A$. 
If we denote by $\Hc_0$ the linear subspace of $\Hc$ spanned by the set 
$\{v_1,\dots,v_n\}\cup\{\bb_1,\dots,\bb_n\}$, 
then $\dim\Hc_0<\infty$.  
By completing the orthonormal systems $\{v_1,\dots,v_n\}$ and $\{\bb_1,\dots,\bb_n\}$ 
to orthonormal bases in $\Hc_0$ we can see that 
there exists a unitary operator $W_0\colon\Hc_0\to\Hc_0$ 
such that $W_0v_k=\bb_k$ for $k=1,\dots,n$. 
Now let $W\colon\Hc\to\Hc$ be the unitary operator defined by the conditions 
$W\vert_{\Hc_0}=W_0$ and $Wv=v$ for every $v\in\Hc_0^\perp$. 
Then $W\in U(\Hc)\cap (\1+\Fc(\Hc))$ and moreover we have 
$$W^*AW=\sum_{k=1}^n\lambda_k(\cdot,Wv_k)Wv_k =\sum_{k=1}^n\lambda_k(\cdot,\bb_k)\bb_k\in\Dc\cap\Fc(\Hc)$$ 
hence $A\in\Dc_{\Fc(\Hc)}$. 

\eqref{extremes_item2} 
If $\Ic=\Bc(\Hc)$, then $U_{\Ic}(\Hc)=U(\Hc)$, hence 
$\Dc_{\Ic}$ is precisely the set of all operators in $\Bc(\Hc)$ which are unitary equivalent 
to diagonal operators with respect to some orthonormal basis. 
If $T\in\Bc(\Hc)$ is a diagonal operator with respect to some orthonormal basis in $\Hc$, 
then it is a normal operator and its spectral measure $E_T$ is supported on the set $\Lambda$ 
of eigenvalues, that is, 
for every Borel set $\sigma\subseteq\CC$ we have $E_T(\sigma)=E_T(\sigma\cap\Lambda)$. 
Moreover, the spectrum of $T$ is the closure of the set of eigenvalues 
(see for instance \cite[Problem 63]{Ha82}).  
These spectral properties are preserved by unitary equivalence, 
hence they are shared by every operator in~$\Dc_{\Bc(\Hc)}$.

Conversely, let $T\in\Bc(\Hc)$ be a normal operator whose spectral measure 
is supported by the set of eigenvalues $\Lambda$. 
It follows by the spectral theorem or by a direct verification that 
the eigenspaces corresponding to distinct eigenvalues of any normal operator are mutually orthogonal. 
Since the Hilbert space is separable, it then follows that the set $\Lambda$ is at most countable, 
and then the countable additivity property of the spectral measure $E_T$ 
implies that we have $\1=\sum_{\lambda\in\Lambda}E_T(\{\lambda\})$, 
where the sum of mutually orthogonal operators is convergent in the strong operator topology. 
Moreover, for every $\lambda\in\Lambda$ and $v\in\Ran E_T(\{\lambda\})$ we have $Tv=\lambda v$. 
Therefore, if we pick arbitrary orthogonal bases in the subspaces $\Ran E_T(\{\lambda\})$ for $\lambda\in\Lambda$ 
and then we take the union of these bases, then we obtain an orthogonal basis in $\Hc$ 
such that $T$ is a diagonal operator with respect to that basis, and this completes the proof. 
\end{proof}

\begin{remark}\label{eigenvalues}
\normalfont
In connection with Proposition~\ref{extremes}\eqref{extremes_item2} 
we note that if $T\in\Bc(\Hc)$ is a normal operator 
whose set of eigenvalues is everywhere dense in the spectrum, 
then $T$ need not have pure point spectrum. 
For instance, consider the unit interval $I=[0,1]\subset\RR$ endowed with the Lebesgue measure 
and define the separable Hilbert space $\Hc=\Hc_1\oplus\Hc_2$, 
where $\Hc_j=L^2(I)$ for $j=1,2$. 
Let $\{v_n\}_{n\ge 1}$ be any orthonormal basis in $\Hc_1$, 
$\{\lambda_n\}_{n\ge 1}$ be any dense sequence in the interval $I$, 
and $M\in\Bc(\Hc_2)$ be the multiplication operator defined by $(Mf)(t)=tf(t)$ for $f\in \Hc_2=L^2(I)$ and 
a.e.\ $t\in I$. 
If $T\in\Bc(\Hc)$ is the operator defined by $Tv_n=\lambda_nv_n$ for every $n\ge 1$ 
and $T\vert_{\Hc_2}=M$, then $T$ is a self-adjoint operator whose eigenvalues is everywhere dense in the spectrum, 
and yet its eigenspaces do not span the whole space, hence $T$ does not have pure point spectrum.

We refer to \cite{Wi76} for alternative characterizations of normal operators with pure point spectrum 
in the sense of Proposition~\ref{extremes}\eqref{extremes_item2} above. 
\end{remark}

\begin{remark}\label{WvN}
\normalfont
If $\Ic=\Bc(\Hc)$, then Proposition~\ref{extremes}\eqref{extremes_item2} 
shows that $\Dc_{\Ic}^{\sa}\subsetneqq\Ic^{\sa}$. 

Moreover, the Weyl-von Neumann theorem implies that if $A=A^*\in\Bc(\Hc)$,  
then there exist $A_1\in\Dc_{\Ic}^{\sa}$ and 
$A_2=A_2^*\in\Kc(\Hc)$ such that $A=A_1+A_2$ 
(see for instance~\cite[Sect. II.4]{Da96}, or \cite{Ku58} 
for a generalization involving symmetrically normed ideals). 
In particular, if the self-adjoint operator $A$ does not have pure point spectrum, 
then we obtain $A_1,A_2\in\Dc_{\Ic}^{\sa}$ and $A_1+A_2=A\not\in\Dc_{\Ic}^{\sa}$. 
This shows that $\Dc_{\Ic}^{\sa}$ fails to be a linear subspace of $\Ic^{\sa}$ if $\Ic=\Bc(\Hc)$. 
\end{remark}

Now the following question naturally arises. 

\begin{question}\label{second_quest}
\normalfont
Is it true that the conclusion of Remark~\ref{WvN} can be generalized, 
in the sense that for every nonzero ideal $\Ic$ in $\Bc(\Hc)$, 
the set $\Dc_{\Ic}^{\sa}$  is a proper subset of $\Ic^{\sa}$,  
fails to be a real linear space, and linearly spans $\Ic^{\sa}$?  
\end{question}


The first part of the above question is answered in the affirmative  
by the following statement. 
We still don't know the answer to the second part of the above question, 
except for the information derived in Proposition~\ref{July6_cor} 
and Remark~\ref{info} below. 

\begin{proposition}\label{conj}
If $\Ic$ is an operator ideal in $\Bc(\Hc)$ such that  $\Fc(\Hc)\ne\Ic$, 
then we have $\Dc_{\Ic}^{\sa}\subsetneqq\Ic^{\sa}$. 
\end{proposition}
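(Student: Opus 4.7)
The plan is to construct explicitly an operator $A\in\Ic^{\sa}$ that cannot be $U_{\Ic}(\Hc)$-diagonalized, by taking $A:=UA_0U^*$ where $A_0\in\Dc\cap\Ic^{\sa}$ is a well-chosen diagonal operator and $U\in U(\Hc)$ is a ``block rotation'' unitary which is qualitatively far from $\1+\Ic$. The intuition is that if $A$ were $U_{\Ic}(\Hc)$-diagonalizable, then the eigenvectors of $A$ would be forced (by the compactness of $V-\1$ for $V\in U_{\Ic}(\Hc)$) to lie asymptotically close to the fixed basis $\bb$; the rotation $U$ is chosen precisely so that no rearrangement of the eigenvectors $\{U\bb_n\}$ has this property.

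First, from $\Fc(\Hc)\subsetneqq\Ic$ I would extract a diagonal operator
$$A_0=\sum_{n\ge 1}\mu_n\langle\cdot,\bb_n\rangle\bb_n\in\Dc\cap\Ic^{\sa}$$
whose diagonal entries are strictly positive, strictly decreasing, and tend to $0$. This uses that $\Ic$ contains an infinite-rank positive compact operator $T$, that the eigenvalue sequence of $T$ takes infinitely many distinct positive values (each eigenspace of a compact operator being finite-dimensional), and that $\Ic$ is closed both under unitary conjugation and under majorization of $s$-numbers; a strictly decreasing subsequence of the positive eigenvalues of $T$ then yields a diagonal operator of the required form. The operator $A_0$ is then injective, with simple nonzero spectrum $\{\mu_n\}_{n\ge 1}$.

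Next, set $U:=\bigoplus_{k\ge 1}R_k\in U(\Hc)$, where $R_k$ is the $\pi/4$-rotation on the two-dimensional subspace spanned by $\{\bb_{2k-1},\bb_{2k}\}$. Then $\langle U\bb_m,\bb_n\rangle$ vanishes unless $m,n$ lie in the same block $\{2k-1,2k\}$, in which case $|\langle U\bb_m,\bb_n\rangle|=1/\sqrt{2}$; in particular,
$$|\langle U\bb_m,\bb_n\rangle|\le 1/\sqrt{2}\quad\text{for all }m,n\ge 1.$$
Set $A:=UA_0U^*\in\Ic^{\sa}$. Then $A$ has the same simple positive nonzero spectrum $\{\mu_n\}$ as $A_0$, and $\{U\bb_n\}_{n\ge 1}$ is a complete orthonormal set of eigenvectors of~$A$.

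Finally, assume for contradiction that $A=VDV^*$ with $V\in U_{\Ic}(\Hc)$ and $D=\sum_n d_n\langle\cdot,\bb_n\rangle\bb_n\in\Dc\cap\Ic^{\sa}$. Each $V\bb_n$ is an eigenvector of $A$ with eigenvalue $d_n$; injectivity of $A$ forces $d_n\ne 0$, and simplicity of the nonzero eigenvalues of $A$ then yields a bijection $\sigma\colon\NN\to\NN$ and unimodular scalars $\alpha_n\in\TT$ with $d_n=\mu_{\sigma(n)}$ and $V\bb_n=\alpha_n U\bb_{\sigma(n)}$ for every $n$. On the one hand, $V-\1\in\Ic\subseteq\Kc(\Hc)$ is compact and $\bb_n\to 0$ weakly, forcing $\|(V-\1)\bb_n\|\to 0$. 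On the other hand, the bound above gives
$$\|V\bb_n-\bb_n\|^2=2-2\,\mathrm{Re}\bigl(\alpha_n\langle U\bb_{\sigma(n)},\bb_n\rangle\bigr)\ge 2-2|\langle U\bb_{\sigma(n)},\bb_n\rangle|\ge 2-\sqrt{2}>0$$
for every $n$, a contradiction. Hence $A\notin\Dc_{\Ic}^{\sa}$, proving the strict inclusion. The step I expect to require the most care is the preliminary extraction of $A_0$, since one must produce a diagonal operator in $\Ic$ with strictly decreasing positive diagonal entries uniformly across all ideals properly containing $\Fc(\Hc)$; once $A_0$ is in hand, the remaining spectral-and-compactness argument is clean.
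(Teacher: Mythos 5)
Your argument is correct for proper ideals and is essentially the paper's own proof: your block $\pi/4$-rotation $U$ is the $t=\pi/4$ instance of the rotated bases constructed in Lemma~\ref{bases2}, your $A_0$ plays the role of the multiplicity-one diagonal operator, and your final contradiction (eigenvectors forced onto the rotated basis versus $\|(V-\1)\bb_n\|\to 0$ by compactness) is exactly Lemma~\ref{bases1}. The one point needing attention is the step ``$V-\1\in\Ic\subseteq\Kc(\Hc)$'', which requires $\Ic$ to be a proper ideal, so the case $\Ic=\Bc(\Hc)$ (permitted by the hypothesis $\Fc(\Hc)\ne\Ic$) must be disposed of separately, e.g.\ via Proposition~\ref{extremes}\eqref{extremes_item2} or Remark~\ref{WvN}.
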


\begin{proof}
See \cite{BPW13}. 
\end{proof}

\begin{proposition}\label{July6_cor}
There exists an injective linear mapping 
$\Pi\colon\Bc(\Hc)\to\Bc(\Hc)$ with the properties: 
\begin{enumerate}
\item\label{July6_cor_item0}
$\Pi$ is a homomorphism of triple systems, that is, 
for all $R,S,T\in\Bc(\Hc)$ we have $\Pi(RST)=\Pi(R)\Pi(S)\Pi(T)$. 
\item\label{July6_cor_item0.5} 
If $S,T\in\Bc(\Hc)$, then we have $ST=TS$ if and only if $\Pi(S)\Pi(T)=\Pi(T)\Pi(S)$. 
\item\label{July6_cor_item1} 
For every $T\in\Bc(\Hc)$ we have $\Pi(T^*)=\Pi(T)^*$. 
\item\label{July6_cor_item2} 
$\Bc(\Hc)^{+}\cap\Ran\Pi=\{0\}$. 
\item\label{July6_cor_item2.5} 
If $\Ic_1$ and $\Ic_2$ are operator ideals in $\Bc(\Hc)$, 
then we have $\Ic_1\subseteq\Ic_2$ if and only if $\Pi(\Ic_1)\subseteq\Pi(\Ic_2)$. 
 \item\label{July6_cor_item3} 
 For every operator ideal $\Ic$ in $\Bc(\Hc)$ we have $\Pi(\Ic)\subseteq\Ic$, $\Pi^{-1}(\Ic)\subseteq\Ic$, and 
$\Pi^{-1}(\Dc_{\Ic})\subseteq\Ic^2$. 
\end{enumerate}
\end{proposition}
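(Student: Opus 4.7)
The plan is to realize $\Pi$ as an off-diagonal block swap using a $2\times 2$ matrix realization of $\Bc(\Hc)$. Partition the index set of the fixed basis $\bb=\{\bb_n\}_{n\ge 1}$ into two infinite disjoint subsets, and let $\Hc_1,\Hc_2$ be the closed linear spans of the corresponding basis vectors, so that $\Hc=\Hc_1\oplus\Hc_2$ with both summands infinite-dimensional. Fix unitaries $U_i\colon\Hc\to\Hc_i$ carrying $\bb$ to orthonormal bases of $\Hc_i$, and define
$$\Pi(T):=\begin{pmatrix} 0 & U_1 T U_2^* \\ U_2 T U_1^* & 0 \end{pmatrix}$$
as an operator on $\Hc=\Hc_1\oplus\Hc_2$. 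By construction $\Pi$ is linear and injective, and the basis compatibility ensures that the reference diagonal algebra $\Dc$ corresponds to block-diagonal operators whose two blocks are themselves diagonal in the bases of $\Hc_1$ and $\Hc_2$.

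Properties \eqref{July6_cor_item0}--\eqref{July6_cor_item1} reduce to $2\times 2$ matrix identities: $\Pi(R)\Pi(S)$ is block-diagonal with blocks $U_i(RS)U_i^*$, another multiplication by $\Pi(T)$ recovers the off-diagonal form of $\Pi(RST)$, the commutativity criterion \eqref{July6_cor_item0.5} follows from inspecting these diagonal blocks, and \eqref{July6_cor_item1} is the identity $(U_1TU_2^*)^*=U_2T^*U_1^*$. For \eqref{July6_cor_item2}, introduce the self-adjoint unitary $W:=\1_{\Hc_1}\oplus(-\1_{\Hc_2})$; a direct computation gives $W\Pi(T)W^{-1}=-\Pi(T)$, so positivity of $\Pi(T)$ implies positivity of $-\Pi(T)$ and hence $\Pi(T)=0$, forcing $T=0$ by injectivity. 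Property \eqref{July6_cor_item2.5} is immediate from injectivity. For the first two claims in \eqref{July6_cor_item3}, each off-diagonal block of $\Pi(T)$ equals $P_i\Pi(T)P_j$ where $P_i$ is the projection onto $\Hc_i$; these blocks are unitary conjugates of $T$, and a $2\times 2$ block operator belongs to an ideal iff each of its blocks does, so $\Pi(T)\in\Ic\iff T\in\Ic$ yields both $\Pi(\Ic)\subseteq\Ic$ and $\Pi^{-1}(\Ic)\subseteq\Ic$.

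The crux of the proof is the inclusion $\Pi^{-1}(\Dc_{\Ic})\subseteq\Ic^2$. Suppose $\Pi(T)=VDV^*$ with $V\in U_{\Ic}(\Hc)$ and $D\in\Dc\cap\Ic$; in particular $\Pi(T)\in\Ic$ and hence $T\in\Ic$ by the above. Let $P$ be the orthogonal projection onto $\Hc_1$. By the basis compatibility $D$ is block-diagonal, hence $[D,P]=0$ and consequently $[\Pi(T),VPV^*]=V[D,P]V^*=0$. Writing $V=\1+K$ with $K\in\Ic$, one has $VPV^*-P=KP+PK^*+KPK^*\in\Ic$. Therefore
$$[\Pi(T),P]=[\Pi(T),P-VPV^*]\in\Ic\cdot\Ic\subseteq\Ic^2,$$
since both factors in each of the two resulting products lie in $\Ic$. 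On the other hand a direct block calculation gives
$$[\Pi(T),P]=\begin{pmatrix} 0 & -U_1TU_2^* \\ U_2TU_1^* & 0\end{pmatrix},$$
whose off-diagonal blocks are unitary conjugates of $\pm T$. Their membership in $\Ic^2$ transfers to $T$, giving $T\in\Ic^2$ as required.

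The main obstacle is precisely this last commutator manipulation: one must simultaneously exploit the smallness of $V-\1$ and of $\Pi(T)$, each lying in $\Ic$, to obtain the quadratic improvement to $\Ic^2$. The remaining properties follow essentially formally from the off-diagonal block design once the basis is chosen compatibly with the decomposition $\Hc=\Hc_1\oplus\Hc_2$.
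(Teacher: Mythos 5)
The paper itself does not prove this proposition --- the proof is deferred to the forthcoming paper \cite{BPW13} --- so there is no argument in the text to measure yours against; I can only assess your proof directly, and it is correct. The off-diagonal swap $\Pi(T)=U_1TU_2^*+U_2TU_1^*$ relative to a splitting $\Hc=\Hc_1\oplus\Hc_2$ along a partition of the fixed basis $\bb$ does everything claimed: the $2\times2$ block computations give the triple-product identity, the commutation criterion and the $*$-compatibility; conjugation by $\1_{\Hc_1}\oplus(-\1_{\Hc_2})$ kills $\Bc(\Hc)^{+}\cap\Ran\Pi$; and the identity $T=U_1^*(U_1TU_2^*)U_2$ transfers ideal membership back and forth between $T$ and the blocks of $\Pi(T)$, which settles the remaining inclusions except the last. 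For the crucial inclusion $\Pi^{-1}(\Dc_{\Ic})\subseteq\Ic^2$ your commutator trick is sound: because $\Hc_1$ is spanned by basis vectors, $P=P_{\Hc_1}$ commutes with every $D\in\Dc$, so $[\Pi(T),P]=[\Pi(T),P-VPV^*]$ is a difference of products of the two $\Ic$-elements $\Pi(T)=VDV^*$ and $P-VPV^*=-(KP+PK^*+KPK^*)$, hence lies in $\Ic^2$; reading off the off-diagonal blocks $\mp U_1TU_2^*$ of $[\Pi(T),P]$ and conjugating back yields $T\in\Ic^2$. Two points are worth making explicit when you write this up: first, $\Ic^2$ must be understood as the two-sided ideal of finite sums of products $AB$ with $A,B\in\Ic$, so that differences of products and the final left and right multiplications by the isometries $U_i$ stay inside $\Ic^2$; second, the choice of $\Hc_1,\Hc_2$ as closed spans of subsets of the fixed basis is not cosmetic --- it is precisely what forces $[D,P]=0$ for every $D\in\Dc$, without which the argument does not start.
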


\begin{proof}
See \cite{BPW13}. 
\end{proof}

\begin{remark}\label{info}
\normalfont
In the case when $\Ic^2\ne \Ic$, Proposition~\ref{July6_cor} is related to 
the second part of Question~\ref{second_quest} 
inasmuch as it provides some information on the size of the set $\Dc_{\Ic}$. 
We can thus get a feeling of the gap between the sets $\Dc_{\Ic}^{\sa}\subsetneqq\Ic^{\sa}$ 
referred to in Proposition~\ref{conj}. 

More precisely, it follows by Proposition~\ref{July6_cor}\eqref{July6_cor_item3} that  
$$\Dc_{\Ic}\cap\Pi(\Ic)\subseteq\Pi(\Ic^2)\subsetneqq\Pi(\Ic).$$ 
As the mapping $\Pi\colon\Ic\to\Ic$ is linear and injective, 
we obtain the infinite-dimensional linear subspace $\Pi(\Ic^{\sa})$ of $\Ic^{\sa}$ 
with the property that the linear subspace spanned by $\Dc_{\Ic}^{\sa}\cap\Pi(\Ic^{\sa})$ 
is a proper subspace of $\Pi(\Ic^{\sa})$. 
Note that $\Dc_{\Ic}^{\sa}\cap\Pi(\Ic^{\sa})\ne\{0\}$ since by \eqref{df} 
we have $\Dc_{\Fc(\Hc)}^{\sa}=\Fc(\Hc)^{\sa}$ , 
hence $\{0\}\subsetneqq\Pi(\Fc(\Hc)^{\sa})\subseteq\Dc_{\Ic}^{\sa}\cap\Pi(\Ic^{\sa})$. 

As another geometric feature, 
it follows by Proposition~\ref{July6_cor} that the aforementioned subspace $\Pi(\Ic^{\sa})$  
meets the positive cone $\Ic^{+}$ only at the vertex $0$. 
\end{remark}

\begin{example}
\normalfont
If $0< p<\infty$ and $\Ic=\Sg_p(\Hc)$ is the $p$-th Schatten ideal, then 
Proposition~\ref{July6_cor} provides some nontrivial information on the set $\Dc_{\Ic}$.    
More specifically, we have $\Ic^2=\Sg_{p/2}(\Hc)$, hence 
$\Ic^2\ne \Ic$ and the above Remark~\ref{info} applies. 
\end{example}

For the sake of completeness, let us mention that in the case when $\Ic$ is the Hilbert-Schmidt ideal, 
some sufficient conditions 
for $U_{\Ic}(\Hc)$-diagonalizability were provided in \cite{Hi85} as follows. 

\begin{theorem}
Let $\Ic=\Sg_2(\Hc)$ be the Hilbert-Schmidt ideal. 
Let $X=X^*\in\Ic$ and and denote $x_{ij}=(X\bb_j, \bb_i)$ for all $i,j\ge 1$.  
If there exist $\rho,s\in\RR$ such that $0<\rho<1$ and $0<s\le 3(1-\rho)/100$ such that 
$$(\forall j\ge1)\quad \vert x_{j+1,j+1}\vert\le\rho\vert x_{jj}\vert$$
and 
$$(\forall i,j\ge 1,\ i\ne j)\quad \vert x_{ij}\vert^2\le\frac{s^2}{(ij)^2}\cdot\vert x_{ii}x_{jj}\vert $$
then there exists $W\in U_{\Ic}(\Hc)$ such that $W^{-1}XW\in\Dc$. 
\end{theorem}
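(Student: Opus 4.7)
My proof plan is as follows.

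\smallskip\noindent
\emph{Step 1 (Setup and spectral gap).}
Write $X = D + A$ where $D := \diag(x_{11}, x_{22}, \ldots)$ and $A := X - D$ is the purely off-diagonal self-adjoint part. If $x_{11}=0$, the recursion $|x_{j+1,j+1}| \le \rho |x_{jj}|$ forces all diagonal entries to vanish, and then the off-diagonal hypothesis gives $A=0$, so $X=0$ and we may take $W=\1$. Assume henceforth $x_{11}\ne 0$. The geometric decay then yields strict monotonicity $|x_{11}| > |x_{22}| > \cdots$, so in particular the diagonal entries are pairwise distinct with the uniform lower bound
\[
|x_{ii} - x_{jj}| \ge (1-\rho)\max(|x_{ii}|,|x_{jj}|) \qquad (i \ne j).
\]
Combined with the off-diagonal hypothesis this gives $A \in \Sg_2(\Hc)$ with $\|A\|_{\Sg_2}$ controlled by $s$, $\rho$, and $|x_{11}|$.

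\smallskip\noindent
\emph{Step 2 (Fixed-point reformulation).}
Seek $W$ of the form $W = \exp(K)$ with $K \in \Sg_2(\Hc)$ skew-adjoint and purely off-diagonal (i.e.\ $K_{ii}=0$). A short computation using the expansion $W^{-1}XW = X + [X,K] + \tfrac{1}{2}[[X,K],K] + \cdots$ shows that $W^{-1}XW$ is diagonal if and only if the off-diagonal part of this series vanishes, which after isolating the leading term $[D,K]_{ij} = (x_{jj}-x_{ii})K_{ij}$ becomes a fixed-point equation
\[
K \;=\; \Phi(K), \qquad \Phi(K)_{ij} \;=\; \frac{1}{x_{ii}-x_{jj}}\bigl(A_{ij} + R(K)_{ij}\bigr) \quad (i\ne j),
\]
where $R(K)$ gathers the commutator and Baker--Campbell--Hausdorff remainder terms, all of which are at least quadratic in $K$ or bilinear in $K$ and $A$.

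\smallskip\noindent
\emph{Step 3 (Contraction estimates).}
Working in the Banach space $\Ec$ of off-diagonal skew-adjoint Hilbert--Schmidt operators with the Hilbert--Schmidt norm, I would show that $\Phi$ is a contraction on a ball $B_r \subset \Ec$ of radius $r \sim s/(1-\rho)$. The essential input is
\[
|\Phi(0)_{ij}|^2 \;=\; \frac{|x_{ij}|^2}{|x_{ii}-x_{jj}|^2} \;\le\; \frac{s^2}{(1-\rho)^2}\cdot \frac{1}{(ij)^2}\cdot\frac{\min(|x_{ii}|,|x_{jj}|)}{\max(|x_{ii}|,|x_{jj}|)} \;\le\; \frac{s^2}{(1-\rho)^2(ij)^2},
\]
whose square-summability gives $\|\Phi(0)\|_{\Sg_2} \le Cs/(1-\rho)$ for an absolute constant $C$. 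The nonlinear remainder $R(K)$ is estimated using $\|A\|_\infty \le \|A\|_{\Sg_2}$ and the geometric series bound on $\exp(\pm K)$ in $\1 + \Sg_2$, yielding a Lipschitz constant proportional to $s/(1-\rho)$. The explicit numerical hypothesis $s \le 3(1-\rho)/100$ is engineered exactly so that this Lipschitz constant is strictly less than $1$ on the invariant ball $B_r$.

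\smallskip\noindent
\emph{Step 4 (Conclusion).}
Banach's fixed-point theorem applied to $\Phi\colon B_r \to B_r$ produces $K \in \Ec$ with $\Phi(K)=K$. Then $W := \exp(K)$ lies in $U(\Hc) \cap (\1+\Sg_2(\Hc)) = U_{\Ic}(\Hc)$, and by construction $W^{-1}XW \in \Dc$.

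\smallskip\noindent
The main obstacle is Step 3: tracking the nonlinear remainder $R(K)$ through the BCH series and verifying that every resulting off-diagonal matrix element is still controlled by the weighted $\ell^2$-sum $\sum_{i\ne j}(ij)^{-2}\cdot(\cdots)$, rather than producing an uncontrolled Hilbert--Schmidt contribution. The sharp constant $3/100$ reflects the bookkeeping in these bounds.
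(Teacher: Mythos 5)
You should first know that the paper does not prove this theorem at all: its ``proof'' is the single line ``See [Hi85, Th.~1]'', deferring entirely to Hinkkanen's paper, where the unitary is built by a careful iterative scheme rather than a one-shot fixed point. So there is no in-paper argument to compare against; your proposal has to stand on its own.

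As a plan it is reasonable, and Steps 1, 2 and 4 are sound: the diagonal entries are pairwise distinct with $|x_{ii}-x_{jj}|\ge(1-\rho)\max(|x_{ii}|,|x_{jj}|)$, the linearized solution is skew-adjoint, and your bound $\Vert\Phi(0)\Vert_{\Sg_2}\le C s/(1-\rho)$ is correct precisely because the hypothesis $|x_{ij}|^2\le s^2|x_{ii}x_{jj}|/(ij)^2$ carries the factor $|x_{ii}x_{jj}|$ that cancels the small divisor. The genuine gap is the one you yourself flag in Step 3, and it is not a bookkeeping issue but the entire content of the theorem: the divisors $x_{ii}-x_{jj}$ decay geometrically in $\min(i,j)$, so $\Phi$ is \emph{not} Lipschitz (indeed not even bounded) on a ball of the plain Hilbert--Schmidt space. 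Concretely, for a term like $(AK)_{ij}$ with $K$ a generic element of $\Sg_2$, one only gets $|(AK)_{ij}|\lesssim (s\sqrt{|x_{ii}|}/i)\,\Vert Ke_j\Vert$, and after dividing by $|x_{ii}-x_{jj}|\sim(1-\rho)|x_{ii}|$ (say $i<j$) this produces a factor $1/\sqrt{|x_{ii}|}\to\infty$; the resulting series is not square-summable. Nor does the obvious repair work: if one passes to the weighted class $|K_{ij}|\le c\sqrt{|x_{ii}x_{jj}|}/(ij)$, that class is stable under the products occurring in $R(K)$, but it is destroyed by the division by $x_{ii}-x_{jj}$, since $\sqrt{|x_{ii}x_{jj}|}/|x_{ii}-x_{jj}|\le 1/(1-\rho)$ loses the weight $\sqrt{|x_{ii}x_{jj}|}$ entirely. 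So the map $\Phi$ does not visibly preserve any of the natural candidate balls, and the contraction argument as stated does not close. This is exactly why Hinkkanen's proof occupies a paper and why the hypotheses are tuned the way they are (including the role of the factors $1/(ij)^2$, which must be shown to regenerate under the iteration). Until you exhibit a norm in which $\Phi$ maps a ball into itself and contracts --- and verify that the quadratic and higher BCH terms respect that norm --- you have a strategy with a correctly identified hole, not a proof.
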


\begin{proof}
See \cite[Th. 1]{Hi85}. 
\end{proof}

\subsection*{Application to Cartan subalgebras}

As a direct application of the above results, we can prove the following statement 
which provides a partial answer to Question~\ref{first_quest}. 

\begin{proposition}\label{appl}
If $\Ic$ is a nonzero operator ideal in $\Bc(\Hc)$, 
then there exist at least two $U_{\Ic}(\Hc)$-conjugacy classes of Cartan subalgebras of~$\ug_{\Ic}(\Hc)$. 
\end{proposition}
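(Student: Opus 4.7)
The plan is to exhibit two explicit Cartan subalgebras of $\ug_{\Ic}(\Hc)$ and prove they lie in distinct $U_{\Ic}(\Hc)$-conjugacy classes. Fix an orthonormal basis $\{\bb_n\}_{n\ge 1}$ of $\Hc$ and let $\Dc \subseteq \Bc(\Hc)$ denote the MASA of operators diagonal with respect to this basis. The first candidate is
$$
\Cc_1 := \Dc \cap \ug_{\Ic}(\Hc),
$$
which is a Cartan subalgebra because $\Fc(\Hc) \subseteq \Ic$ forces $\Cc_1$ to contain every skew-adjoint rank-one diagonal $i(\bb_n \otimes \bb_n^{*})$, and any $X \in \ug_{\Ic}(\Hc)$ commuting with all of them must itself be diagonal. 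The same observation shows that the double commutant of $\Cc_1$ in $\Bc(\Hc)$ equals $\Dc$; this identification will be the crucial tool below.

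When $\Ic = \Bc(\Hc)$ the result is immediate: the assignment $\Cc \mapsto \Cc''$ is a $U(\Hc)$-equivariant bijection between Cartan subalgebras of $\ug(\Hc)$ and MASAs of $\Bc(\Hc)$ (with inverse $\Ac \mapsto i\Ac^{\sa}$), so Remark~\ref{first_rem} already yields infinitely many $U(\Hc)=U_{\Ic}(\Hc)$-conjugacy classes. It remains to treat $\{0\} \subsetneqq \Ic \subsetneqq \Bc(\Hc)$, in which case $\Ic \subseteq \Kc(\Hc)$. Here I would construct a second Cartan subalgebra via a block-diagonal Hadamard rotation: let $U \in U(\Hc)$ be the unitary with $U\bb_{2k-1}=(\bb_{2k-1}+\bb_{2k})/\sqrt 2$ and $U\bb_{2k}=(\bb_{2k-1}-\bb_{2k})/\sqrt 2$ for every $k\ge 1$, set $e_n := U\bb_n$, and define $\Cc_2 := U\Dc U^{*} \cap \ug_{\Ic}(\Hc)$. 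The argument for $\Cc_1$ applied to the basis $\{e_n\}$ shows that $\Cc_2$ is a Cartan subalgebra of $\ug_{\Ic}(\Hc)$ with $\Cc_2'' = U\Dc U^{*}$.

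To rule out $U_{\Ic}(\Hc)$-conjugacy, suppose for contradiction that $V\Cc_1 V^{*}=\Cc_2$ for some $V \in U_{\Ic}(\Hc)$. Taking double commutants in $\Bc(\Hc)$ gives $V\Dc V^{*} = U\Dc U^{*}$, so $W := U^{*}V$ normalizes $\Dc$. Since the minimal projections of $\Dc$ are precisely the $\bb_n\otimes\bb_n^{*}$, such a $W$ must permute them up to phases, giving $W\bb_n=\lambda_n\bb_{\pi(n)}$ for some permutation $\pi$ of $\NN$ and scalars $\lambda_n\in\TT$. Hence $V\bb_n=\lambda_n e_{\pi(n)}$ for every $n$. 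On the other hand, $V=\1+K$ with $K\in\Ic\subseteq\Kc(\Hc)$ compact, and $\bb_n \rightharpoonup 0$ weakly, so $\|V\bb_n-\bb_n\|=\|K\bb_n\|\to 0$. This contradicts the uniform bound $|\langle e_m,\bb_n\rangle|\le 1/\sqrt 2$, which gives
$$
\|\lambda_n e_{\pi(n)}-\bb_n\|^{2} = 2-2\,\Re\bigl(\lambda_n\langle e_{\pi(n)},\bb_n\rangle\bigr) \ge 2-\sqrt 2 > 0
$$
for every $n$.

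The conceptual heart of the argument is the double-commutant identification, which converts a conjugacy question about Cartan subalgebras of $\ug_{\Ic}(\Hc)$ into one about the normalizer of the fixed MASA $\Dc$ inside $U(\Hc)$. The main obstacle I anticipate is the clean verification that $\Cc_1$ and $\Cc_2$ are genuine Cartan subalgebras of $\ug_{\Ic}(\Hc)$ for an arbitrary nonzero ideal $\Ic$ (in particular checking that the commutant of $\Cc_j$ in $\Bc(\Hc)$ is $\Dc$ or $U\Dc U^*$ respectively, which rests on the presence of rank-one projections); once that is in place, the compactness of $V-\1$ together with the elementary Hadamard inner-product estimate closes the argument.
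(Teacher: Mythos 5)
Your proof is correct, but it takes a genuinely different route from the paper's. The paper's proof of this proposition is short because it outsources the hard part: it takes $\Dc\cap\ug_{\Ic}(\Hc)$ as one Cartan subalgebra, invokes Proposition~\ref{conj} (whose proof is deferred to \cite{BPW13}) to produce some $X\in\Ic^{\sa}\setminus\Dc_{\Ic}^{\sa}$, places $iX$ inside a second Cartan subalgebra via Zorn's lemma, and observes that conjugacy of the two would force $X$ to be $U_{\Ic}(\Hc)$-diagonalizable. You instead exhibit the second Cartan subalgebra explicitly and prove non-conjugacy directly, which makes the argument self-contained. Amusingly, your construction is essentially a special case of the machinery the authors develop for the stronger uncountability result: your Hadamard rotation is the $t=\pi/4$ instance of the rotated bases of Lemma~\ref{bases2}, and your use of the weak convergence $\bb_n\to 0$ against the compactness of $V-\1$ is exactly the mechanism of Lemma~\ref{bases1}. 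Where you genuinely improve on that template is the double-commutant step: from $V\Cc_1V^*=\Cc_2$ you pass to $V\Dc V^*=U\Dc U^*$ (legitimate, since $\Cc_j$ contains all the rank-one diagonal projections times $i$, so $\Cc_1''=\Dc$ and $\Cc_2''=U\Dc U^*$), and conclude that $V$ carries the basis $\{\bb_n\}$ onto $\{e_n\}$ up to permutation and phases. Lemma~\ref{bases1} reaches the same conclusion only by picking a diagonal operator of spectral multiplicity one, which does not exist in $\Fc(\Hc)$ and forces the authors into a separate limiting argument (case $2^\circ$ of Theorem~\ref{uncountable}); your version handles $\Ic=\Fc(\Hc)$ uniformly. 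The one place to be careful is the claim that a unitary normalizing $\Dc$ permutes the minimal projections up to phase --- this is correct because conjugation is a $*$-automorphism of the atomic MASA $\Dc$ and hence a bijection on its minimal projections --- and your final estimate $\|\lambda_n e_{\pi(n)}-\bb_n\|^2\ge 2-\sqrt{2}$ is right since $|\langle e_m,\bb_n\rangle|\le 1/\sqrt{2}$ for all $m,n$. In short: correct, more explicit and more elementary than the paper's stated proof, at the cost of duplicating (a two-basis slice of) the work the paper defers to the follow-up.
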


\begin{proof}
If $\Ic=\Bc(\Hc)$, then the assertion follows by Remark~\ref{first_rem}. 
If $\{0\}\subsetneqq\Ic\subsetneqq\Bc(\Hc)$, then 
pick an orthonormal basis $\bb=\{\bb_n\}_{n\ge 1}$ in $\Hc$ 
and denote by $\Dc$ the corresponding set of diagonal operators in $\Bc(\Hc)$, 
just as above. 
Then $\Dc\cap\ug_{\Ic}(\Hc)$ is a Cartan subalgebra of $\ug_{\Ic}(\Hc)$. 
On the other hand, it follows by Proposition~\ref{conj} that 
there exists $X\in\Ic^{\sa}\setminus\Dc_{\Ic}^{\sa}$. 
Since $\ug_{\Ic}(\Hc)=i\Ic^{\sa}$, we have $iX\in\ug_{\Ic}(\Hc)$, 
and then by using Zorn's lemma we can easily find a Cartan subalgebra $\Cc$ of $\ug_{\Ic}(\Hc)$ 
with $iX\in\Cc$. 

Then the Cartan subalgebras $\Dc\cap\ug_{\Ic}(\Hc)$ and $\Cc$ of $\ug_{\Ic}(\Hc)$ 
fail to be $U_{\Ic}(\Hc)$-conjugated to each other. 
In fact, if there exists $V\in U_{\Ic}(\Hc)$ such that the transform $T\mapsto VTV^*$ 
maps $\Cc$ onto $\Dc\cap\ug_{\Ic}(\Hc)$, then $VXV^*\in\Dc$, 
and this is a contradiction with the fact that $X\not\in\Dc_{\Ic}^{\sa}$. 
This completes the proof. 
\end{proof}

The partial answer to Question~\ref{first_quest} provided in Proposition~\ref{appl} 
raises the interesting question 
of classifying the $U_{\Ic}(\Hc)$-conjugacy classes of Cartan subalgebras of $\ug_{\Ic}(\Hc)$  
for an arbitrary operator ideal $\Ic$ in $\Bc(\Hc)$. 
In this connection, we will show in \cite{BPW13} that the above Proposition~\ref{appl} 
can be considerably strengthened.  
Specifically, the forthcoming paper will contain a proof of the fact 
that \emph{if $\{0\}\subsetneqq\Ic\subsetneqq\Bc(\Hc)$, 
then there exist uncountably many $U_{\Ic}(\Hc)$-conjugacy classes of Cartan subalgebras of~$\ug_{\Ic}(\Hc)$}. 
This result is strikingly different from the situation $\Ic=\Bc(\Hc)$, 
where one has only countably many conjugacy classes of Cartan subalgebras, 
as noted in Remark~\ref{first_rem} above. 

To conclude we summarize the information available so far on the $U_{\Ic}(\Hc)$-con\-ju\-gacy classes 
of Cartan subalgebras or maximal abelian self-adjoint subalgebras 
of~$\ug_{\Ic}(\Hc)$, where $\Ic\subseteq\Bc(\Hc)$ is a nonzero operator ideal: 
\begin{itemize}
\item if $\dim\Hc<\infty$ then there is precisely one conjugacy class (Theorem~\ref{first_th}); 
\item if $\dim\Hc=\infty$ and $\Ic=\Bc(\Hc)$ then there are countably many conjugacy classes (Remark~\ref{first_rem}); 
\item if $\dim\Hc=\infty$ and $\{0\}\subsetneqq\Ic\subsetneqq\Bc(\Hc)$, 
then there are uncountably many conjugacy classes (\cite{BPW13}). 
\end{itemize}




\bigskip


\begin{thebibliography}{100000000}




\bibitem[Al82]{Al82}
\textsc{H.R.~Alagia}, 
Cartan subalgebras of Banach-Lie algebras of operators. 
\textit{Pacific J. Math.} \textbf{98} (1982), no. 1, 1--15.

\bibitem[BP66]{BP66}
\textsc{V.K.~Balachandran, P.R.~Parthasarathy}, 
Cartan subalgebras of an $L^\ast$-algebra. 
\textit{Math. Ann.} \textbf{166} (1966), 300--301. 

\bibitem[Be06]{Be06}
\textsc{D.~Belti\c t\u a}, 
\textit{Smooth Homogeneous Structures in Operator Theory}. 
Chapman \& Hall/CRC Monographs and Surveys in Pure and Applied Mathematics, 137. 
Chapman \& Hall/CRC, Boca Raton, FL, 2006.

\bibitem[Be09]{Be09}
\textsc{D.~Belti\c t\u a}, 
Iwasawa decompositions of some infinite-dimensional Lie groups. 
\textit{Trans. Amer. Math. Soc.} \textbf{361} (2009), no.~12, 6613--6644. 

\bibitem[Be11]{Be11}
\textsc{D.~Belti\c t\u a}, 
Functional analytic background for a theory of infinite-dimensional reductive Lie groups. 
In: K.-H. Neeb, A. Pianzola (eds.), 
\textit{Developments and Trends in Infinite-Dimensional Lie Theory}, Progr. Math., 288, 
Birkh\"auser Boston, Inc., Boston, MA, 2011, pp.~367--392.

\bibitem[BPW13]{BPW13}
\textsc{D.~Belti\c t\u a, S.~Patnaik, G.~Weiss}, 
On Cartan subalgebras of operator ideals, 
\textit{paper in preparation}. 

\bibitem[Bor01]{Bor01}
\textsc{A.~Borel}, 
\textit{Lie Groups and Linear Algebraic Groups}. 
Lecture notes for the Programme on Lie Groups 2001, Institute of Mathematical Research, Hong Kong, 2001  
(available at http://hkumath.hku.hk/~imr/records0001/borel.pdf). 

\bibitem[Bot57]{Bo57}
\textsc{R.~Bott}, 
Homogeneous vector bundles. 
\textit{Ann. of Math. (2)} \textbf{66} (1957), 203--248. 

\bibitem[Boy80]{Bo80}
\textsc{R.P.~Boyer}, 
Representation theory of the Hilbert-Lie group ${\rm U}(H)\sb{2}$. 
\textit{Duke Math. J.} \textbf{47} (1980), no. 2, 325--344. 

\bibitem[CGM90]{CGM90}
\textsc{J.A.~Cuenca Mira, A.~Garc\'\i a Mart\'\i n, C.~Mart\'\i n Gonz\'alez},  
Structure theory for $L^*$-algebras. 
\textit{Math. Proc. Cambridge Philos. Soc.} \textbf{107} (1990), no.~2, 361--365. 

\bibitem[Da96]{Da96}
\textsc{K.R.~Davidson}, 
\textit{$C^*$-algebras by Example}. 
Fields Institute Monographs, 6. American Mathematical Society, Providence, RI, 1996.

\bibitem[DPW02]{DPW02}
\textsc{I.~Dimitrov, I.~Penkov, J.A.~Wolf}, 
A Bott-Borel-Weil theory for direct limits of algebraic groups. 
\textit{Amer. J. Math.} \textbf{124} (2002), no.~5, 955--998. 

\bibitem[Di69]{Di69}
\textsc{J.~Dixmier}, 
\textit{Les alg\`ebres d'op\'erateurs dans l'espace hilbertien (alg\`ebres de von Neumann)}. 
Deuxi\`eme \'edition, revue et augment\'ee. 
Cahiers Scientifiques, Fasc. XXV. 
Gauthier-Villars \'Editeur, Paris, 1969. 

\bibitem[Di77]{Di77}
\textsc{J.~Dixmier}, 
\textit{Enveloping Algebras}. 
North-Holland Mathematical Library, Vol. 14. 
North-Holland Publishing Co., Amsterdam-New York-Oxford, 1977.

\bibitem[DFWW04]{DFWW04} 
\textsc{K.~Dykema, T.~Figiel, G.~Weiss, M.~Wodzicki}, 
Commutator structure of operator ideals. 
\textit{Adv. Math.} \textbf{185} (2004), no.~1, 1--79.

\bibitem[GW09]{GW09}
\textsc{R.~Goodman, N.R.~Wallach}, 
\textit{Symmetry, Representations, and Invariants}. 
Graduate Texts in Mathematics, 255. Springer, Dordrecht, 2009.

\bibitem[Ha82]{Ha82}
\textsc{P.R.~Halmos}, 
\textit{A Hilbert Space Problem Book}. Second edition. 
Graduate Texts in Mathematics, 19. Encyclopedia of Mathematics and its Applications, 17. 
Springer-Verlag, New York-Berlin, 1982.

\bibitem[dlH72]{dlH72}
\textsc{P.~de la Harpe}, 
\textit{Classical Banach-Lie Algebras and Banach-Lie Groups of Operators in Hilbert Space}. 
Lecture Notes in Mathematics, Vol. 285. 
Springer-Verlag, Berlin-New York, 1972.

\bibitem[HK77]{HK77}
\textsc{L.A.~Harris, W.~Kaup}, 
Linear algebraic groups in infinite dimensions.
\textit{Illinois J. Math.} \textbf{21} (1977), no.~3, 666--674. 

\bibitem[Hi85]{Hi85}
\textsc{A.~Hinkkanen}, 
On the diagonalization of a certain class of operators. 
\textit{Michigan Math. J.} \textbf{32} (1985), no.3, 349--359. 

\bibitem[KW12]{KW12}
\textsc{V.~Kaftal, G.~Weiss}, 
Majorization and arithmetic mean ideals. 
\textit{Indiana Univ. Math. J.}  (to appear). 

\bibitem[Kn96]{Kn96}
\textsc{A.W.~Knapp}, 
\textit{Lie Groups Beyond an Introduction}. 
Progress in Mathematics, 140. Birkh\"auser Boston, Inc., Boston, MA, 1996. 

\bibitem[Ku58]{Ku58}
\textsc{S.T.~Kuroda}, 
On a theorem of Weyl-von Neumann. 
\textit{Proc. Japan Acad.} \textbf{34} (1958), 11--15. 

\bibitem[Ma42]{Ma42}
\textsc{D.~Maharam}, 
On homogeneous measure algebras. 
\textit{Proc. Nat. Acad. Sci. U. S. A.} \textbf{28} (1942), 108--111. 



\bibitem[Ne98]{Ne98} 
\textsc{K.-H.~Neeb}, 
Holomorphic highest weight representations of infinite-dimensional complex classical groups. 
\textit{J. reine angew. Math.} \textbf{497} (1998), 171--222.

\bibitem[Ne02]{Ne02} 
\textsc{K.-H.~Neeb}, 
Classical Hilbert-Lie groups, their extensions and their homotopy groups. 
In: A.~Strasburger, J.~Hilgert, K.-H.~Neeb and W.~Wojty\'nski (eds.), 
\textit{Geometry and Analysis on Finite- and Infinite-Dimensional Lie Groups} (B\c edlewo, 2000), 
Banach Center Publ., 55, Polish Acad. Sci., Warsaw, 2002, pp.~87--151.

\bibitem[Ne04]{Ne04} 
\textsc{K.-H.~Neeb}, 
Infinite-dimensional groups and their representations. 
in: J.-Ph.~Anker, B.\O rsted (eds.), \textit{Lie Theory}, 
Progr. Math., 228, Birkh\"auser Boston, Boston, MA, 2004, pp.~213--328. 

\bibitem[Ne06]{Ne06}
\textsc{K.-H.~Neeb}, 
Towards a Lie theory of locally convex groups. 
\textit{Jpn. J. Math.} \textbf{1} (2006), no. 2, 291--468. 


\bibitem[Re08]{Re08}
\textsc{J.~Renault}, 
Cartan subalgebras in $C^*$-algebras. 
\textit{Irish Math. Soc. Bull.} \textbf{61} (2008), 29--63.

\bibitem[Sch60]{Sch60}
\textsc{J.R.~Schue}, 
Hilbert space methods in the theory of Lie algebras. 
\textit{Trans. Amer. Math. Soc.} \textbf{95} (1960), 69--80. 

\bibitem[Sch61]{Sch61}
\textsc{J.R.~Schue},
Cartan decompositions for $L^\ast$ algebras. 
\textit{Trans. Amer. Math. Soc.} \textbf{98} (1961), 334--349. 

\bibitem[SS08]{SS08}
\textsc{A.M.~Sinclair, R.R.~Smith}, 
\textit{Finite von Neumann Algebras and Masas}. 
London Mathematical Society Lecture Note Series, 351. 
Cambridge University Press, Cambridge, 2008.

\bibitem[Sp98]{Sp98}
\textsc{T.A.~Springer}, 
\textit{Linear algebraic groups}. Second edition. 
Progress in Mathematics, 9. Birkh\"auser Boston, Inc., Boston, MA, 1998.

\bibitem[SV75]{SV75}
\textsc{\c S.~\c Str\u atil\u a, D.~Voiculescu}, 
\textit{Representations of AF-algebras and of the group $U(\infty)$}. 
Lecture Notes in Mathematics, Vol. 486. Springer-Verlag, Berlin-New York, 1975. 

\bibitem[Up85]{Up85}
\textsc{H.~Upmeier}, 
\textit{Symmetric Banach Manifolds and Jordan $C^\ast$-algebras}. 
North-Holland Mathematics Studies, 104. 
Notas de Matem\'atica, 96. 
North-Holland Publishing Co., Amsterdam, 1985. 

\bibitem[Va89]{Va89}
\textsc{J.~Varga}, 
Traces on irregular ideals. 
\textit{Proc. Amer. Math. Soc.} \textbf{107} (1989), no.~3, 715--723.

\bibitem[Vo00]{Vo00} 
\textsc{D.A.~Vogan, Jr.}, 
The method of coadjoint orbits for real reductive groups. 
In: J.~Adams and D.~Vogan (eds.), 
\textit{Representation Theory of Lie Groups} (Park City, UT, 1998), 
IAS/Park City Math. Ser., 8, Amer. Math. Soc., Providence, RI, 2000, pp.~179--238.

\bibitem[Vo08]{Vo08} 
\textsc{D.A.~Vogan, Jr.}, 
Unitary representations and complex analysis. 
In: E.~Casadio Tarabusi, A.~D'Agnolo and M.~Picardello (eds.), 
\textit{Representation Theory and Complex Analysis}, 
Lecture Notes in Math., 1931, Springer, Berlin, 2008, pp.~259--344.

\bibitem[Wi76]{Wi76}
\textsc{J.P.~Williams}, 
Diagonalizable normal operators. 
\textit{Proc. Amer. Math. Soc.} \textbf{54} (1976), 106--108.

\bibitem[Wo98]{Wo98}
\textsc{J.A.~Wolf}, 
Flag manifolds and representation theory. 
In: J.~Tirao, D.~Vogan, P.~Wolf, 
\textit{Geometry and Representation Theory of Real and $p$-adic Groups} (C\'ordoba, 1995), 
Progr. Math., 158, Birkh\"auser Boston, Boston, MA, 1998, pp.~273--323.

\end{thebibliography}
\end{document}